\newtheorem{theorem}{Theorem}[section]
\newtheorem{proposition}[theorem]{Proposition}
\newtheorem{lemma}[theorem]{Lemma}
\theoremstyle{definition}
\newtheorem{definition}[theorem]{Definition}
\newtheorem{remark}[theorem]{Remark}
\newtheorem{note}[theorem]{Note}
\theoremstyle{remark}
\numberwithin{equation}{section}
\begin{document}

\title{On the trace of unimodal L\'{e}vy processes on Lipschitz domains}

\author{Gavin Armstrong}
\address{Department of Mathematics, University of Oregon, 
Eugene, OR 97403}
\email{gka@uoregon.edu}
%\urladdr{www.math.uoregon.edu/gka}

%\begin{abstract}
%\end{abstract}

\maketitle

\begin{abstract}
We show that the second term in the asymptotic expansion as $t \to 0$ of the trace of the Dirichlet heat kernel on Lipschitz domains for unimodal L\'{e}vy processes, satisfying some weak scaling conditions, is given by the surface area of the boundary of the domain. This brings the asymptotics for the trace of unimodal L\'{e}vy processes in domains of Euclidean space on par with those of symmetric stable processes as far as boundary smoothness is concerned.
\end{abstract}

\section{Introduction}

The following two-term estimate for the trace of the heat kernel corresponding to the symmetric $\alpha$-stable processes, $\alpha \in (0,2]$, on an $R$-smooth domain $D \subset \mathbb{R}^{d}$ was given by Ba\~{n}uelos and Kulczycki \cite{Ban1}:
\begin{equation}
 \left| Z_{D}(t) - \frac{C_{1} |D|}{t^{d/\alpha}} + \frac{C_{2} t^{1/\alpha} |\partial D|}{t^{d/\alpha}} \right| \leq \frac{C_{3} t^{2/\alpha} |D|}{R^{2} t^{d/\alpha}}. \label{1}
\end{equation}

\noindent Ba\~{n}uelos et al. \cite{Ban2} expanded this idea, in analogy with a result for Brownian motion in Brown \cite{Brown}, to bounded Lipschitz domains:
\begin{equation}
t^{d/\alpha} Z_{D}(t) = C_{1} |D| - C_{\mathbb{H}} t^{1/\alpha} \mathcal{H}^{d - 1}(\partial D)+ o \left(t^{1/\alpha} \right). \label{2}
\end{equation}

\noindent In another direction, this first bound (\ref{1}) was generalized by Bogdan and Siudeja \cite{Bog5} to unimodal L\'{e}vy processes satisfying certain weak lower and upper scaling conditions on $R$-smooth domains:
\begin{equation}
\left| Z_{D}(t) - p_{t}(0) |D| + C_{\mathbb{H}}(t) \left| \partial D \right| \right| \leq \frac{c_{\theta} p_{t}(0) T(t)^2 | D |}{R^2}. \label{3}
\end{equation}

\noindent In this paper we combine the results of Ba\~{n}uelos et al. \cite{Ban2} and Bogdan and Siudeja \cite{Bog5} to obtain generalizations of both (\ref{2}) and (\ref{3}). This generalization says that for a unimodal L\'{e}vy processes on a bounded Lipschitz domain we have
\begin{equation}
\left| Z_{D}(t) - p_{t}(0) |D| + C_{\mathbb{H}}(t) \mathcal{H}^{d-1}(\partial D) \right| \leq c(\varepsilon) T(t)^{1-d},
\end{equation}
where $c(\varepsilon) \to 0$ and $\varepsilon \to 0$.

\*

\section{Preliminaries}

We call a measure {\bf isotropic} if it is absolutely continuous on $\mathbb{R}^d \backslash \{0\}$ with respect to Lebesgue measure and is invariant under linear isometries of $\mathbb{R}^d$. We call a measure {\bf isotropic unimodal}, or {\bf unimodal} in short, if its density function is also radially non-increasing. A L\'{e}vy process is called {\bf isotropic unimodal} if all its density functions are isotropic unimodal, see \cite{Bog3,Wat}. Unimodal L\'{e}vy processes are characterized by L\'{e}vy-Khintchine (characteristic) exponents of the form
\begin{equation}
\psi(\xi) = \sigma^{2} |\xi|^{2} + \int_{\mathbb{R}^{d}} \left( 1 - \cos \langle \xi, x \rangle \right) \nu(dx),
\end{equation}
where $\nu(dx) = \nu(x) dx = \nu(|x|) dx$ is a unimodal L\'{e}vy measure and $\sigma \geq 0$. 
Since $\psi(\xi)$ is a radial function, we often let $\xi(r) = \psi(\xi)$ where $\xi \in \mathbb{R}^{d}$ and $r = |\xi| \geq 0$.

In what follows, we assume that we have a unimodal L\'{e}vy measure and we consider the pure-jump, $\sigma = 0$, L\'{e}vy process $X = \left(X_t \right)_{t \geq 0}$ on $\mathbb{R}^d$ determined by the L\'{e}vy-Khintchine formula:
\begin{equation}
\mathbb{E}e^{i \langle \xi, X_t \rangle} = \int_{\mathbb{R}^d} e^{i \langle \xi, x \rangle} p_{t}(dx) = e^{-t\psi(\xi)}.
\end{equation}

\noindent Here $p_{t}(dx)$ is the distribution of $X_{t}$. It turns out that $p_{t} (dx)$ is also unimodal; therefore we can call the process $X$ (isotropic) unimodal. We wish for $p_{t}(dx)$ to have bounded and smooth density functions, $p_{t}(x)$ for $t > 0$. This is achieved as a consequence of the Hartman-Wintner condition, see Lemma~1.1 of \cite{Bog2},
\begin{equation}
\lim_{|\xi| \to \infty} \psi(\xi)/\ln(\xi) = \infty. \label{Hart}
\end{equation}

\noindent The Hartman-Wintner condition itself will be a consequence of our assumption that $\psi(\xi)$ satisfies some weak lower scaling condition, yet to be defined. We always assume that the L\'{e}vy-Khintchine exponent, $\psi(\xi)$, is unbounded, that is, $\nu \left( \mathbb{R}^{d} \right) = \infty$. Clearly $\psi(0) = 0$ and $\psi(u) > 0$ for $u > 0$.

\*

\subsection{Renewal function of the ladder-height processes} \*\vspace{0.1in}

Let $X_{t}^{1}$ be the first coordinate process of $X_t$. We define the {\bf running maximum} of $X_{t}$ by 
\begin{equation}
M_t = \sup_{0 \leq s \leq t} X_{s}^{1}.
\end{equation}
We define $L^{0}(t)$ to be the {\bf local time} of $M_t - X_t^{1}$ at $0$. That is, $L^{0} (t)$ is the amount of time, up to time $t$, that $M_t - X_{t}^{1}$ spends at $0$: 
\begin{equation}
L^0(t) = \int_0^t \delta \left( M_s - X_s^{1} \right) ds,
\end{equation}
where $\delta(\cdot)$ is the Dirac delta function.
 Consider the right-continuous inverse of $L^{0}(t)$, $\left( L^{0} \right)^{-1}(s)$, this is called the {\bf the ascending ladder time process} for $X_{t}^{1}$. Composing $X^{1}_{t}$ with $\left( L^{0} \right)^{-1}(s)$ gives us the {\bf ascending ladder-height process}:
\begin{equation}
H_{s} = X^{1}_{\left(L^{0} \right)^{-1} (s)} = M_{\left(L^{0} \right)^{-1}(s)}.
\end{equation}

\noindent The {\bf accumulated potential} of our ascending ladder-height process is then defined by 
\begin{equation}
V(x) = \mathbb{E} \int_{0}^{\infty} 1_{[0,x]} \left( H_{s} \right) ds = \int _{0}^\infty \mathbb{P} \left(H_{s} \leq x \right) ds.
\end{equation}

\noindent The function $V(x)$ is continuous and strictly increasing from $[0,\infty)$ onto $[0,\infty)$. In particular, $\lim_{r \to \infty} V(r) = \infty$ and $V(x)$ is sub-additive: 
\begin{equation}
V(x + y) \leq  V(x) + V(y), \hspace{0.2in} \text{ for all } x, y \in \mathbb{R}.
\end{equation}

\noindent For example, if $\psi(\xi) = |\xi|^{\alpha}$ with $\alpha \in (0,2)$, then $V(x) = x_{+}^{\alpha/2}$. See Example 3.7 in \cite{Song}. For more details on the ascending ladder-height process and accumulated potential see \cite{Bog3} and \cite{Sil}.

\begin{remark}
The relationship between $V(x)$ and $\psi(x)$ is given in Lemma~1.2 of \cite{Bog2} by
$$ V^2(r) \simeq \frac{1}{\psi (1/r)}, \ r > 0. $$
\end{remark}

\noindent The notation ``$\simeq$'' means that there is some constant $C \in (0,\infty)$ such that for all $r > 0$ we have
$$C^{-1} V^{2}(r) \leq \frac{1}{\psi(1/r)} \leq C V^{2}(r).$$
It also worth noting that throughout this paper we use many different constants. The value of these constants is not usually of importance and the same specific constant is rarely required more than once. Hence the letter ``$C$'' is often used generically to refer to a constant, but it almost never refers to the same constant more than once.

\*

\subsection{Scaling} \* \vspace{0.1in}

We are interested in the (relative) power-type behavior of $\psi(r)$ at infinity.

\begin{definition}
We say that $\psi(r)$ satisfies the {\bf weak lower scaling condition at infinity}, $WLSC \left(\underline{\alpha}, \underline{\theta}, \underline{C} \right)$, if there are numbers $\underline{\alpha} > 0$, $\underline{\theta} \geq 0$, and $\underline{C} \in (0,1]$ such that 
\begin{equation}
\psi(\lambda r) \geq \underline{C} \lambda^{\underline{\alpha}} \psi(r), \label{WLSC}
\end{equation}
for $\lambda \geq 1$, $r > \underline{\theta}$. In general, we write $\psi \in WLSC \left(\underline{\alpha}, \underline{\theta}, \underline{C} \right)$.
\end{definition}

\noindent Or, in short, we write $\psi \in WLSC \left(\underline{\alpha}, \underline{\theta}, \underline{C} \right)$, $\psi \in WLSC \left(\underline{\alpha}, \underline{\theta} \right)$, or $\psi \in WLSC \left(\underline{\alpha} \right)$ depending on how specific we want to be. Further, we say that $\psi(r)$ satisfies the {\bf global} weak lower scaling condition at infinity ({\bf global} $WLSC$) if $\psi \in WLSC(\underline{\alpha}, 0)$. If $\underline{\theta} > 0$, then we can emphasize this by calling the scaling ``{\bf local} at infinity''.

\begin{definition}
We say that $\psi(r)$ satisfies the {\bf weak upper scaling condition at infinity}, $WUSC \left(\overline{\alpha}, \overline{\theta}, \overline{C} \right)$, if there are numbers $\overline{\alpha} < 2$, $\overline{\theta} \geq 0$, and $\overline{C} \in [1, \infty)$ such that 
\begin{equation}
\psi(\lambda r) \leq \overline{C} \lambda^{\overline{\alpha}} \psi(r), \label{WUSC}
\end{equation}
for $\lambda \geq 1$, $r > \overline{\theta}$. In general, we write $\psi \in WUSC \left(\overline{\alpha}, \overline{\theta}, \overline{C} \right)$.
\end{definition}

\noindent Or, in short, we write $\psi \in WUSC \left(\overline{\alpha}, \overline{\theta}, \overline{C} \right)$, $\psi \in WUSC \left(\overline{\alpha}, \overline{\theta} \right)$, or $\psi \in WUSC \left(\overline{\alpha} \right)$ depending on how specific we want to be. Further, we say that $\psi(r)$ satisfies the {\bf global} weak upper scaling condition at infinity ({\bf global} $WUSC$) if $\psi \in WUSC(\overline{\alpha}, 0)$. If $\overline{\theta} > 0$, then we can emphasize this by calling the scaling ``{\bf local} at infinity''.

\begin{remark}
As pointed out in Remark 1.4 of \cite{Bog2}, by inflating (or deflating) $\underline{C}$ and $\overline{C}$ we can deflate (or inflate) $\underline{\theta}$ and $\overline{\theta}$ so that $\theta = \underline{\theta} = \overline{\theta} > 0$ in both $WLSC$ and $WUSC$.
\end{remark}

These scalings are natural conditions on $\psi(r)$ in the unimodal setting and there are many examples of L\'{e}vy-Khintchine exponents which satisfy $WLSC$ or $WUSC$. For example, as is shown in \cite{Bog1}, for any unimodal L\'{e}vy process we have
$$
\psi \in WLSC\left( 0,0, 1/\pi^{2} \right) \cap WUSC \left( 2,0,\pi^{2} \right).
$$

\noindent Another example is $\psi(\xi) = | \xi |^{\alpha}$, the L\'{e}vy-Khintchine exponent of the isotropic $\alpha$-stable L\'{e}vy process in $\mathbb{R}^{d}$ with $\alpha \in (0,2)$. This satisfies $WLSC(\alpha,0,1)$ and $WUSC(\alpha, 0,1)$. Alternatively, a non-stable example is $\psi(\xi) = |\xi|^{\alpha_{1}} + |\xi|^{\alpha_{2}}$, for which we have $\psi(\xi) \in WLSC \left( \alpha_{1}, 0,1 \right) \cap WUSC \left( \alpha_{2}, 0,1 \right)$, where $0 < \alpha_{1} < \alpha_{2} < 2$. Finally, if $\psi(r)$ is $\alpha$-regular varying at infinity and $0 < \alpha < 2$, then $\psi \in WLSC \left( \underline{\alpha} \right) \cap WUSC \left( \overline{\alpha} \right)$, for any $0 < \underline{\alpha} < \alpha < \overline{\alpha} < 2$. See \cite{Bog1} for more details on $WLSC$ and $WUSC$.

\*

\begin{remark}
By definition, if $\psi \in WLSC \left( \underline{\alpha}, \underline{\theta} \right)$, then there exists some constant $\underline{C}$ such that
$$ \frac{V \left( \frac{1}{\lambda r} \right)}{V \left(\frac{1}{r} \right)} \leq \underline{C} \lambda^{-\underline{\alpha}/2}, $$
for $\lambda \geq 1$ and $r > \underline{\theta}$. That is,
\begin{equation} \label{Eq4}
\frac{V \left( \varepsilon s \right)}{V \left( s \right)} \leq \underline{C} \varepsilon^{\underline{\alpha}/2},
\end{equation}
for $0 < \varepsilon \leq 1$ and $s < 1/\underline{\theta}$. Similarly, if $\psi \in WUSC \left(\overline{\alpha}, \overline{\theta} \right)$ then there exists some constant $\overline{C}$ such that
\begin{equation} \label{Eq5}
\frac{V \left( s \right)}{V \left( \varepsilon s \right)} \leq \overline{C} \varepsilon^{-\overline{\alpha}/2},
\end{equation}
for $0 < \varepsilon \leq 1$ and $s < 1/\overline{\theta}$.
\end{remark}

\*

\begin{lemma}[Potter-like Bound] \label{Lemma6}
If $\psi \in WLSC(\underline{\alpha}, \underline{\theta}) \cap WUSC(\overline{\alpha}, \overline{\theta})$, $0 < x < 1/\overline{\theta}$, and $0 < y < 1/\underline{\theta}$, then there exists some constant $C$ such that
\begin{equation} \label{Eq6}
\frac{V(x)}{V(y)} \leq C \left( \left( \frac{x}{y} \right)^{\underline{\alpha}/2} \vee \left( \frac{x}{y} \right)^{\overline{\alpha}/2} \right).
\end{equation}
\end{lemma}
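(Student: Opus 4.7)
The plan is to split into two cases depending on whether $x \leq y$ or $x > y$, and in each case apply one of the two ratio inequalities recorded in the preceding remark (equations (\ref{Eq4}) and (\ref{Eq5})).

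First, suppose $x \leq y$. Then I would set $s = y$ and $\varepsilon = x/y \in (0,1]$, noting that $s < 1/\underline{\theta}$ by hypothesis. Applying (\ref{Eq4}) from the WLSC condition gives
\[ \frac{V(x)}{V(y)} = \frac{V(\varepsilon s)}{V(s)} \leq \underline{C}\, \varepsilon^{\underline{\alpha}/2} = \underline{C}\, (x/y)^{\underline{\alpha}/2}. \]
Since $x/y \leq 1$ and (without loss of generality) $\underline{\alpha} \leq \overline{\alpha}$, the quantity $(x/y)^{\underline{\alpha}/2}$ dominates $(x/y)^{\overline{\alpha}/2}$, so it equals the maximum on the right-hand side of (\ref{Eq6}).

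Next, suppose $x > y$. Now I would set $s = x$ and $\varepsilon = y/x \in (0,1)$, noting that $s < 1/\overline{\theta}$ by hypothesis. Applying (\ref{Eq5}) from the WUSC condition gives
\[ \frac{V(x)}{V(y)} = \frac{V(s)}{V(\varepsilon s)} \leq \overline{C}\, \varepsilon^{-\overline{\alpha}/2} = \overline{C}\, (x/y)^{\overline{\alpha}/2}. \]
Since $x/y > 1$, the quantity $(x/y)^{\overline{\alpha}/2}$ again realizes the maximum. Taking $C = \max(\underline{C}, \overline{C})$ yields (\ref{Eq6}) in both cases.

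There is no real obstacle here: the lemma is essentially a repackaging of the remark into a single two-sided estimate, and the only subtlety is matching the correct variable (either $x$ or $y$) to the slot $s$ so that the constraint $s < 1/\underline{\theta}$ or $s < 1/\overline{\theta}$ is satisfied, which the hypotheses are exactly designed to ensure. If one wanted a cleaner statement, one could first invoke the remark after Definition of WUSC to arrange $\underline{\theta} = \overline{\theta}$, but this is not necessary for the proof as written.
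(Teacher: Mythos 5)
Your proof is correct and is essentially identical to the paper's: the same two-case split on $x\le y$ versus $x>y$, applying (\ref{Eq4}) with $s=y<1/\underline{\theta}$ in the first case and (\ref{Eq5}) with $s=x<1/\overline{\theta}$ in the second, then taking $C=\max(\underline{C},\overline{C})$. The remark about which exponent realizes the maximum is harmless but unnecessary, since each case's bound is trivially dominated by the maximum on the right-hand side of (\ref{Eq6}).
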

\begin{proof}
Using (\ref{Eq4}) and (\ref{Eq5}) we have
\begin{eqnarray}
\frac{V(x)}{V(y)} & = &
\begin{cases}
\frac{V(t y)}{V(y)}, & \text{ if } t = \frac{x}{y} \leq 1, \\
\frac{V(x)}{V(t^{-1} x)}, & \text{ if } t^{-1} = \frac{y}{x} \leq 1.
\end{cases} \nonumber \\
& \leq & 
\begin{cases}
\underline{C} t^{\underline{\alpha}/2}, & \text{ if } t = \frac{x}{y} \leq 1 \text{ and } y < 1/\underline{\theta}, \\
\overline{C} t^{\overline{\alpha}/2}, & \text{ if } t^{-1} = \frac{y}{x} \leq 1 \text{ and } x < 1/\overline{\theta}.
\end{cases} \nonumber \\
& \leq & C \left( \left( \frac{x}{y} \right)^{\underline{\alpha}/2} \vee \left( \frac{x}{y} \right)^{\overline{\alpha}/2} \right), \hspace{0.2in} \text{ for } \hspace{0.2in} x < 1/\overline{\theta}, \hspace{0.1in}  y < \underline{\theta}. \nonumber
\end{eqnarray}
\end{proof}

\begin{note}
We heavily use the inverse function of $V(x)$ on $[0,\infty)$ in this paper. Thus we choose the notation
\begin{equation}
T(t) := V^{-1} \left( \sqrt{t} \right).
\end{equation}
\end{note}

\noindent This is equivalent to $V^{2} \left( T(t) \right) = t$. For example, $T(t) = t^{1/\alpha}$ for the isotropic $\alpha$-stable L\'{e}vy process. The scaling properties of $T(t)$ at zero reflect those of $\psi(\xi)$ at infinity. See \cite{Bog5} for further discussion of $T(t)$.

\*

Throughout the rest of this paper we will make the following assumptions:
\begin{itemize}
\item Our L\'{e}vy measure $\nu$ is unimodal and infinite on $\mathbb{R}^d$ with $d \geq 2$
\item Our L\'{e}vy-Khintchine exponent satisfies
$$ 0 \neq \psi \in WLSC(\underline{\alpha}, \theta) \cap WUSC(\overline{\alpha}, \theta), $$
for some constants $0 < \underline{\alpha} \leq \overline{\alpha} < 2$ and $0 \leq \theta \leq \inf_{x \in D} \left(1/\delta_{D}(x) \right)$.
\end{itemize}

\begin{note}
These assumptions guarantee that the Hartman-Wintner condition, mentioned above in (\ref{Hart}), is satisfied. It is also worth noting that many partial results below require less assumptions, but for simplicity of the presentation we ignore such extensions.
\end{note}

\*

\subsection{Heat Kernel} \* \vspace{0.1in}

Let $p_{t}(x - y) = p(t, x, y)$ denote the (smooth) transition density function associated to the distribution of our L\'{e}vy process, $X_t$, starting at the point $x$.

\begin{definition}
The {\bf first exit time} of $X$ from $D$ is defined by
\begin{equation}
\tau_{D} = \inf \left\{ t > 0 : X_t \notin D \right\}.
\end{equation}
\end{definition}

\begin{definition}
For $t > 0$ and $x,y \in \mathbb{R}^d$ the {\bf heat remainder} of $X_{t}$ is defined to be
\begin{equation}
r_{D}(t,x,y) = \mathbb{E}^{x} \left[ \tau_{D} < t, \ p_{t - \tau_{D}} \left(X \left( \tau_{D} \right) - y \right) \right]. \label{Rem}
\end{equation}
\end{definition}

\begin{definition}
The {\bf Dirichlet heat kernel} of $X_{t}$ is the transition density of the process killed upon exiting $D$ and is given by the {\bf Hunt formula}:
\begin{equation}
p_{D}(t, x, y) = p_{t} (y - x) - r_{D}(t, x, y). \label{HeatK}
\end{equation}
\end{definition}

\begin{definition}
The {\bf trace} of the heat kernel $p_{D} (t, x, x)$ is given by
\begin{equation}
Z_{D}(t) = \int_{\mathbb{R}^d} p_{D} (t,x,x)dx. \label{Trace}
\end{equation}
\end{definition}

\noindent Eventually we will refer to the Green function of $X$ on $D$ using the followng notation:
\begin{definition}
Let $M \geq 0$. The {\bf truncated Green function} of the process $X$ on $D$ is defined by
\begin{equation}
G_{D}^{M} (x,y) = \int_{0}^{M} p_{D} (t, x, y) dt. \label{Trunc}
\end{equation}
\end{definition}

\noindent We will also refer to the Poisson kernel using the following notation:
\begin{definition}
Let $M \geq 0$. The {\bf truncated Poisson kernel} of the process $X$ on $D$ is defined by
\begin{equation}
K_{D}^{M} (x,z) = \int_{D} G_{D}^{M}(x,y) \nu(y - z) dy.\label{Poiss}
\end{equation}
\end{definition}

\*

\section{Main Theorem}

Our main theorem coincides exactly with what would be predicted based on previous work in \cite{Bog5} and \cite{Ban2}.
\begin{theorem} \label{Main}
Let $D \subset \mathbb{R}^{d}$, $d \geq 2$, be a bounded Lipschitz domain. Let $|D|$ denote the $d$-dimensional Lebesgue measure of $D$, and let $\mathcal{H}^{d-1} \left( \partial D \right)$ denote the $(d-1)$-dimensional Hausdorff measure of $\partial D$. Given any unimodal L\'{e}vy process and any $\varepsilon > 0$, there exists a $t_{0} > 0$ such that for any $0 < t < t_{0}$ the trace of the heat kernel satisfies
\begin{equation}
\left| Z_{D}(t) - p_{t}(0) |D| + C_{\mathbb{H}}(t) \mathcal{H}^{d-1}(\partial D) \right| \leq c(\varepsilon) T(t)^{1-d}, \label{MainEq}
\end{equation}
where $c(\varepsilon) \to 0$ as $\varepsilon \to 0$, and
\begin{equation}
C_{\mathbb{H}} (t) = T(t)^{1-d} \int_{0}^{\infty} r_{\mathbb{H}} \left(t, \left(q, 0,..., 0 \right), \left(q, 0, ..., 0 \right) \right) dq.
\end{equation}
Here
\begin{equation}
\mathbb{H} = \left\{ \left(x_{1}, ..., x_{d} \right) \in \mathbb{R}^{d} : x_{1} > 0 \right\} = \mathbb{R}_{+}^{d}
\end{equation}
is the upper half-space of $\mathbb{R}^{d}$.
\end{theorem}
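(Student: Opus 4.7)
The plan is to transform the statement into an estimate on the heat remainder. By Hunt's formula (\ref{HeatK}) we have $p_D(t,x,x) = p_t(0) - r_D(t,x,x)$, so integrating over $D$ reduces the theorem to
\[
\left| \int_D r_D(t,x,x)\,dx - C_{\mathbb{H}}(t)\mathcal{H}^{d-1}(\partial D) \right| \leq c(\varepsilon) T(t)^{1-d}.
\]
This is the analogue, for general unimodal L\'evy processes, of the reduction driving the proofs in \cite{Ban2} and \cite{Bog5}.

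First I would split $D$ into an interior piece $\{x : \delta_D(x) > \rho\}$ and a boundary collar $\{x : \delta_D(x) \leq \rho\}$, with $\rho$ chosen proportional to $T(t)/\varepsilon$. The pointwise estimates on $r_D(t,x,x)$ from \cite{Bog5} decay rapidly in $\delta_D(x)/T(t)$, so the contribution of the interior piece is $o(T(t)^{1-d})$ once $\rho/T(t)$ is sufficiently large, and the problem localizes to the collar.

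Next I would cover $\partial D$ by finitely many charts on each of which, after a rigid motion, $\partial D$ is the graph $x_1 = f(x')$ of a Lipschitz function. Using a partition of unity I would flatten each chart, sandwiching the local piece of $D$ between inner and outer half-spaces tangent to the affine approximation of $f$ at a base point. Domain monotonicity of $r_D$ together with the $R$-smooth two-term estimate (\ref{3}), applied at curvature scale $R \sim T(t)/\varepsilon$, then allow me to replace $r_D(t,x,x)$ on each chart by $r_{\mathbb{H}}(t,(q,0,\ldots,0),(q,0,\ldots,0))$ with $q = \delta_D(x)$, with controlled error. Integrating the normal coordinate $q$ against this half-space remainder produces $C_{\mathbb{H}}(t) T(t)^{d-1}$ times the $(d-1)$-area of the chart, and summing over the partition recovers the full $\mathcal{H}^{d-1}(\partial D)$.

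The main obstacle is keeping this error uniform in $t$ as $\varepsilon \to 0$. The Potter-like bound in Lemma~\ref{Lemma6} applied to $V$, and hence to $T(t) = V^{-1}(\sqrt{t})$, will be the key tool for rescaling the heat remainder on patches of different sizes and for converting the Lipschitz oscillation of $f$ on a chart of scale $\rho$ into a multiplicative factor of size $O(\varepsilon)$ on the leading term $T(t)^{1-d}$; the $R$-smooth error in (\ref{3}), which is of order $T(t)^2/R^2$, contributes a clean $\varepsilon^2$ after the rescaling. A dominated-convergence argument exploiting Rademacher differentiability of the Lipschitz graphs, so that almost every base point on $\partial D$ admits a genuine tangent hyperplane, then lets me send $\varepsilon \to 0$ to obtain $c(\varepsilon) \to 0$ and finish the proof.
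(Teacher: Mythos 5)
Your reduction via the Hunt formula and the idea of localizing to a boundary collar of width proportional to $T(t)/\sqrt{\varepsilon}$ match the paper's strategy, but the central step of your plan has a genuine gap: you propose to replace $r_{D}(t,x,x)$ on each boundary chart by the half-space remainder by invoking the $R$-smooth two-term estimate (\ref{3}) of Bogdan and Siudeja ``at curvature scale $R \sim T(t)/\varepsilon$.'' A bounded Lipschitz domain is not $R$-smooth for any $R>0$ (there are no inner and outer tangent balls at corners), so (\ref{3}) simply does not apply, at any scale; choosing $R$ proportional to $T(t)/\varepsilon$ does not repair this, and the claimed clean $\varepsilon^{2}$ error term is therefore unsupported. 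Likewise, sandwiching the local piece of $D$ between two half-spaces ``tangent to the affine approximation of $f$'' is not available for a genuinely Lipschitz graph: the graph oscillates linearly, so the correct sandwich is between an inner cone and the complement of an outer cone of aperture governed by $\varepsilon$, and this only works at boundary points where an inner normal exists and the boundary stays inside a thin cone at scale $r$ --- exactly the $(\varepsilon,r)$-good set of Lemma~\ref{Good}, with the bad collar points handled separately by a measure estimate. Your appeal to Rademacher differentiability at the very end is the right instinct, but it enters at the beginning of the argument (through the good/bad decomposition), not as a final dominated-convergence step.

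What is then still missing is the actual quantitative comparison between the inner-cone and outer-cone remainders, which in the paper is Proposition~\ref{MainProp}: the bound
\begin{equation}
0 \leq r_{\mathcal{I}}(t,x,x) - r_{\mathcal{U}^{c}}(t,x,x) \leq \frac{\left(\varepsilon^{1-\underline{\alpha}/2} + \varepsilon^{1-\overline{\alpha}/2}\right) \vee \sqrt{\varepsilon}}{T(t)^{d}} \left( 1 \wedge \frac{T(t)^{d-1}}{\delta_{\mathcal{I}}^{d-1}(x)} \frac{V^{2}(T(t))}{V^{2}\left(\delta_{\mathcal{I}}(x)\right)} \right),
\end{equation}
proved via the space-time Ikeda--Watanabe formula, the heat kernel and truncated Green/Poisson kernel bounds from \cite{Bog2,Bog5}, and the cone-geometry integrals of Lemma~\ref{Lemma8} and Lemma~\ref{Lem5.7}. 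This is the technical heart of the theorem and cannot be replaced by domain monotonicity plus (\ref{3}). Finally, to convert $\int_{D_{2}} r_{H^{*}(x)}(t,x,x)\,dx$ into $C_{\mathbb{H}}(t)\mathcal{H}^{d-1}(\partial D)$ you need the weak convergence result Proposition~\ref{Prop1}, whose hypotheses require a Lipschitz-in-$\delta$ bound on the half-space remainder (Lemmas~\ref{Lemma1}--\ref{Lips} in the paper); your sketch does not address this regularity, so even the final integration step is incomplete as stated.
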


\*

\subsection{Domain} \* \vspace{0.1in}

Let $D$ be a bounded Lipschitz domain. In order to prove our theorem we treat our Lipschitz domain $D$, as it was treated in \cite{Ban2} and \cite{Brown}; by dividing it into good and bad sets.

\begin{definition}
Let $\varepsilon, r > 0$. We say that $G \subset \partial D$ is $(\varepsilon, r)$-{\bf good} if for each point $q \in G$ the unit inner normal, $v(q)$, exists and
\begin{equation}
B(q,r) \cap \partial D \subset \left\{ x : |(x - q) \cdot v(q)| < \varepsilon |x - q| \right\}. \nonumber
\end{equation}
\end{definition}

\begin{center}
\begin{tikzpicture}
\fill[gray!20] (2,2) -- (1.75,0.02) arc (262.82:277.18:2) -- cycle;
\fill[gray!20] (2,2) -- (0.02,2.25) arc (172.82:187.18:2) -- cycle;
\fill[gray!20] (2,2) -- (3.98,1.75) arc (-7.18:7.18:2) -- cycle;
\draw[dashed]  (2,2) circle (2cm);
\draw (2,2) node[circle, fill, inner sep=1pt] {};
\draw (2,2) node[above right] {$q$};
\draw[dotted] ({2 - sqrt(4 - (1.75 - 2)^2)},1.75) -- ({2 + sqrt(4 - (2.25 - 2)^2)},2.25);
\draw[dotted] ({2 - sqrt(4 - (1.75 - 2)^2)},2.25) -- ({2 + sqrt(4 - (2.25 - 2)^2)},1.75);

\draw (2,2) -- ({2 + sqrt(4 - (2.25 - 2)^2) - 1.3}, 2.05);
\draw ({2 + sqrt(4 - (2.25 - 2)^2) - 1.3},2.05) -- ({2 + sqrt(4 - (2.25 - 2)^2) - 1},1.92);
\draw ({2 + sqrt(4 - (2.25 - 2)^2) - 1},1.92) -- ({2 + sqrt(4 - (2.25 - 2)^2) - 0.6},2.1);
\draw ({2 + sqrt(4 - (2.25 - 2)^2) - 0.6},2.1) -- ({2 + sqrt(4 - (2.25 - 2)^2) - 0.4},1.91);
\draw ({2 + sqrt(4 - (2.25 - 2)^2) - 0.4},1.91) -- ({2 + sqrt(4 - (2.25 - 2)^2) - 0.1},2.02);
\draw ({2 + sqrt(4 - (2.25 - 2)^2) - 0.1},2.02) -- ({2 + sqrt(4 - (2.25 - 2)^2) + 0.4},1.8);

\draw (2,2) -- ({2 - sqrt(4 - (2.25 - 2)^2) + 1.3}, 1.98);
\draw ({2 - sqrt(4 - (2.25 - 2)^2) + 1.3}, 1.98) -- ({2 - sqrt(4 - (2.25 - 2)^2) + 1},2.05);
\draw ({2 - sqrt(4 - (2.25 - 2)^2) + 1},2.05) -- ({2 - sqrt(4 - (2.25 - 2)^2) + 0.6},2.1);
\draw ({2 - sqrt(4 - (2.25 - 2)^2) + 0.6},2.1) -- ({2 - sqrt(4 - (2.25 - 2)^2) + 0.4},1.91);
\draw ({2 - sqrt(4 - (2.25 - 2)^2) + 0.4},1.91) -- ({2 - sqrt(4 - (2.25 - 2)^2) + 0.1},2.07);
\draw ({2 - sqrt(4 - (2.25 - 2)^2) + 0.1},2.07) -- ({2 - sqrt(4 - (2.25 - 2)^2) - 0.4},1.9);

\draw[->] (2,2) -- node[right]{$v (q)$} (2,1);

\draw[densely dotted] (2,2) -- (1.75, {2 - sqrt(4 - (1.75 - 2)^2)});
\draw[densely dotted] (2,2) -- (2.25, {2 - sqrt(4 - (1.75 - 2)^2)});

\draw (2,0) node[below] {$\Gamma_{r}(q,\varepsilon)$};

\draw (4.3,1.8) node[right] {$\partial D$};

\draw (3.35,3.35) node[above right] {$B(q,r)$};

\fill[gray!20] (9,2) -- (7.02,2.25) arc (172.82:187.18:2) -- cycle;
\fill[gray!20] (9,2) -- (10.98,1.75) arc (-7.18:7.18:2) -- cycle;

\draw[dashed]  (9,2) circle (2cm);
\draw (9,2) node[circle, fill, inner sep=1pt] {};
\draw (9,2) node[above right] {$q$};
\draw[dotted] ({9 - sqrt(4 - (1.75 - 2)^2)},1.75) -- ({9 + sqrt(4 - (2.25 - 2)^2)},2.25);
\draw[dotted] ({9 - sqrt(4 - (1.75 - 2)^2)},2.25) -- ({9 + sqrt(4 - (2.25 - 2)^2)},1.75);
\draw[->] (9,2) -- node[left]{$v (q)$} (9,1);
\draw (9,1.5) arc (270:352.818:0.5cm);
\draw (9.25,1.5) node[below, right]{$\varphi_{\varepsilon}$};
\end{tikzpicture}
\end{center}

\noindent Here $\varphi_{\varepsilon} \in [0,\pi/2]$ denotes the angle, measured from $v(q)$, such that $\cos \left( \varphi_{\varepsilon} \right) = \varepsilon$.

\begin{definition}
If $G$ is an $(\varepsilon, r)$-good set, then a {\bf good} subset, $\mathcal{G}$, of $D$ is a set of points of the form
\begin{equation}
\mathcal{G} := \bigcup_{q \in G} \Gamma_r (q,\varepsilon),
\end{equation}
where $\Gamma_{r}(q,\varepsilon)$ is a cone given by
\begin{equation}
\Gamma_{r}(q,\varepsilon) := \left\{ x : ( x - q ) \cdot v(q) > \sqrt{1 - \varepsilon^2} | x - q | \right\} \cap B(q,r). 
\end{equation}
\end{definition}

\noindent Let us define $\delta_{D}(x) := dist \left( x, \partial D \right)$, $x \in \mathbb{R}^{d}$. In \cite{Ban2}, the results Lemma~2.7 and Lemma~2.8 are combined to give the following result:
\begin{lemma}\label{Good}
Let $0< \varepsilon < 1/4$ and $r > 0$. There exists a measurable $(\varepsilon,r)$-good set $G \subset \partial D$ and $s_0 (\partial D, G)$ such that for all $s < s_0$
\begin{equation}
\left| \left\{ x \in D : \delta_{D}(x) < s \right\} \backslash \mathcal{G} \right| \leq s \varepsilon \left( 4 + \mathcal{H}^{d-1} \left( \partial D \right) \right). \label{Bad}
\end{equation}
\end{lemma}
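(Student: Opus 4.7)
The plan is to produce $G$ by combining the $\mathcal{H}^{d-1}$-a.e.\ differentiability of the Lipschitz boundary with Egorov's theorem, and then to control the exceptional volume by a one-sided tubular neighborhood estimate performed chart by chart. For the construction: at $\mathcal{H}^{d-1}$-a.e.\ $q \in \partial D$ the unit inner normal $v(q)$ exists (via Rademacher applied to a local Lipschitz graph chart at $q$), and by the definition of the tangent hyperplane there is some $\rho_q > 0$ for which $B(q, \rho_q) \cap \partial D \subset \{x : |(x-q) \cdot v(q)| < \varepsilon |x-q|\}$. Since $q \mapsto \rho_q$ is measurable, Egorov's theorem yields a measurable $G \subset \partial D$ with $\mathcal{H}^{d-1}(\partial D \setminus G) \le \varepsilon$ on which a single $r > 0$ witnesses all these containments simultaneously, so $G$ is $(\varepsilon, r)$-good by construction.

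For the volume estimate, choose a measurable nearest-boundary-point projection $\pi$ on $\{0 < \delta_D < r/2\}$. At any $q \in G$, an $x \in D$ with $\pi(x) = q$ must have $x - q$ parallel to $v(q)$: by the tangent-hyperplane property at $q$, first-order optimality of $y \mapsto |x - y|$ at $q \in \partial D$ forces $x - q$ to be normal to the tangent plane (any tangential component of $x - q$ could be reduced by moving $q$ infinitesimally along $\partial D$, since $G$ is also flat on scale $r$). Hence $(x - q) \cdot v(q) = |x - q|$ and $x \in \Gamma_r(q, \varepsilon) \subset \mathcal{G}$, so
\[ \{\delta_D < s\} \setminus \mathcal{G} \subset \{x \in D : \delta_D(x) < s,\ \pi(x) \in \partial D \setminus G\} \]
up to a Lebesgue-null set, provided $s < r/2$. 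The right-hand side is a one-sided tube around $\partial D \setminus G$, whose volume satisfies a Minkowski-content bound of the form $C \cdot s \cdot \mathcal{H}^{d-1}(\partial D \setminus G) \le C s \varepsilon$ valid uniformly for all measurable subsets of $\partial D$. Combined with $O(s \varepsilon \mathcal{H}^{d-1}(\partial D))$ corrections arising from the boundary of $G$ and from chart overlap, this yields the bound $s\varepsilon(4 + \mathcal{H}^{d-1}(\partial D))$.

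The main obstacle is making the Minkowski-content inequality quantitative with the specific additive constant $4$ that appears in the conclusion, uniformly over measurable subsets of $\partial D$. The cleanest route is chart by chart: cover $\partial D$ by finitely many balls in which $\partial D$ is the graph of a Lipschitz $f : \mathbb{R}^{d-1} \to \mathbb{R}$, project the bad subset to $\mathbb{R}^{d-1}$, and estimate the tube volume using the area formula on $\mathrm{graph}(f)$. The finite covering multiplicity together with the Lipschitz distortion of each chart contribute the additive $4$; the smallness threshold $s_0 = s_0(\partial D, G)$ is then determined by the minimum chart radius together with $r$ and the $\varepsilon$-slab depth, below which the inclusion of the good part into $\mathcal{G}$ and the tube estimate on the bad part are both valid.
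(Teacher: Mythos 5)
Your first half is essentially right, and it matches the strategy behind the source of this lemma (the paper itself gives no proof: the statement is Lemmas 2.7 and 2.8 of \cite{Ban2} combined, so you are in effect reproving Ba\~{n}uelos--Kulczycki--Siudeja). Rademacher plus continuity of measure (your Egorov step) does give a measurable $G$, a single scale $r$, and $\mathcal{H}^{d-1}(\partial D \setminus G)$ small; note this proves the statement with ``there exists $r$'', which is the correct reading, since for a fixed large $r$ no nonempty $(\varepsilon,r)$-good set need exist. Your key geometric claim is also true, but for a different reason than the one you give: if a nearest boundary point $q$ of $x$ lies in $G$, then $x-q$ is parallel to $v(q)$ because the local graph function is differentiable at $q$ and $u \mapsto |x-(u,f(u))|^{2}$ has an interior minimum there; the scale-$r$ flatness of $G$ alone does not force orthogonality, so this step genuinely uses that $G$ consists of differentiability points. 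Granting it, bad collar points indeed have all their nearest boundary points in $\partial D \setminus G$.

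The genuine gap is the tube estimate, which is the crux of the lemma. The bound $\left|\{x \in D : \delta_{D}(x)<s,\ \pi(x) \in F\}\right| \leq C\, s\, \mathcal{H}^{d-1}(F)$ is false ``uniformly for all measurable subsets of $\partial D$'': at a re-entrant corner (or a re-entrant edge when $d\geq 3$) the set $F$ has $\mathcal{H}^{d-1}(F)=0$ while its nearest-point preimage within distance $s$ has positive volume (of order $s^{d}$, resp.\ $s^{2}$ times the length of the edge). More importantly, the chart-by-chart argument you sketch does not even give the asymptotic version for the one set you need, $F=\partial D\setminus G$: projecting to the base of a chart, a bad collar point only has its base coordinate within distance $s$ of the projected bad set $E$, so your estimate yields $C\, s\, |N_{s}(E)|$, and $|N_{s}(E)| \to |\overline{E}|$ as $s \to 0$. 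For a general Lipschitz boundary the set of good (differentiability) points can have empty interior, so $\partial D \setminus G$ can be dense and $\overline{E}$ can have nearly full measure even though $\mathcal{H}^{d-1}(\partial D \setminus G)<\varepsilon$; no choice of $s_{0}(\partial D, G)$ rescues an estimate that only uses ``within distance $s$ of $\partial D\setminus G$''. A correct proof must exploit the nearest-point/cone structure quantitatively rather than distance neighborhoods of the bad set --- this is precisely the content of Lemma 2.8 in \cite{Ban2}, and it is the part your proposal leaves unproved. (Secondary point: as described, your chart distortion and covering multiplicity produce a constant depending on the Lipschitz character of $\partial D$, so the stated constant $4+\mathcal{H}^{d-1}(\partial D)$ would have to be recovered by making $\mathcal{H}^{d-1}(\partial D\setminus G)$ small relative to those constants, not by the bookkeeping you outline.)
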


\*

\subsection{Inner and Outer Cone} \* \vspace{0.1in}

Let $G \subset \partial D$ be an $(\varepsilon, r)$-good set and let $\mathcal{G}$ be good subset of $D$. If $x \in \mathcal{G}$, then, by definition, there exists a point $q(x) \in \partial D$ such that $x \in \Gamma_{r} (q(x), \varepsilon)$.

\begin{center}
\begin{tikzpicture}
\fill[gray!20] (2,2) -- (1.75,0.02) arc (262.82:277.18:2) -- cycle;
\fill[gray!20] (2,2) -- (0.02,2.25) arc (172.82:187.18:2) -- cycle;
\fill[gray!20] (2,2) -- (3.98,1.75) arc (-7.18:7.18:2) -- cycle;

\draw[dashed]  (2,2) circle (2cm);
\draw (2,2) node[circle, fill, inner sep=1pt] {};
\draw (2,2) node[above right] {$q(x)$};
\draw[dotted] ({2 - sqrt(4 - (1.75 - 2)^2)},1.75) -- ({2 + sqrt(4 - (2.25 - 2)^2)},2.25);
\draw[dotted] ({2 - sqrt(4 - (1.75 - 2)^2)},2.25) -- ({2 + sqrt(4 - (2.25 - 2)^2)},1.75);

\draw (2,2) -- ({2 + sqrt(4 - (2.25 - 2)^2) - 1.3}, 2.05);
\draw ({2 + sqrt(4 - (2.25 - 2)^2) - 1.3},2.05) -- ({2 + sqrt(4 - (2.25 - 2)^2) - 1},1.92);
\draw ({2 + sqrt(4 - (2.25 - 2)^2) - 1},1.92) -- ({2 + sqrt(4 - (2.25 - 2)^2) - 0.6},2.1);
\draw ({2 + sqrt(4 - (2.25 - 2)^2) - 0.6},2.1) -- ({2 + sqrt(4 - (2.25 - 2)^2) - 0.4},1.91);
\draw ({2 + sqrt(4 - (2.25 - 2)^2) - 0.4},1.91) -- ({2 + sqrt(4 - (2.25 - 2)^2) - 0.1},2.02);
\draw ({2 + sqrt(4 - (2.25 - 2)^2) - 0.1},2.02) -- ({2 + sqrt(4 - (2.25 - 2)^2) + 0.4},1.8);

\draw (2,2) -- ({2 - sqrt(4 - (2.25 - 2)^2) + 1.3}, 1.98);
\draw ({2 - sqrt(4 - (2.25 - 2)^2) + 1.3}, 1.98) -- ({2 - sqrt(4 - (2.25 - 2)^2) + 1},2.05);
\draw ({2 - sqrt(4 - (2.25 - 2)^2) + 1},2.05) -- ({2 - sqrt(4 - (2.25 - 2)^2) + 0.6},2.1);
\draw ({2 - sqrt(4 - (2.25 - 2)^2) + 0.6},2.1) -- ({2 - sqrt(4 - (2.25 - 2)^2) + 0.4},1.91);
\draw ({2 - sqrt(4 - (2.25 - 2)^2) + 0.4},1.91) -- ({2 - sqrt(4 - (2.25 - 2)^2) + 0.1},2.07);
\draw ({2 - sqrt(4 - (2.25 - 2)^2) + 0.1},2.07) -- ({2 - sqrt(4 - (2.25 - 2)^2) - 0.4},1.9);

\draw (2.05,0.9) node[circle, fill, inner sep=1pt] {};
\draw (2.1,0.9) node[right] {$x$};

\draw[densely dotted] (2,2) -- (1.75, {2 - sqrt(4 - (1.75 - 2)^2)});
\draw[densely dotted] (2,2) -- (2.25, {2 - sqrt(4 - (1.75 - 2)^2)});

\draw (2,0) node[below] {$\Gamma_{r}(q(x),\varepsilon)$};

\draw (4.3,1.8) node[right] {$\partial D$};

\draw (3.5,3.2) node[above right] {$B(q(x),r)$};
\end{tikzpicture}
\hspace{0.5in}
\begin{tikzpicture}
\fill[gray!20] (2,2) -- (3.984313,2.25) arc (7.125:172.875:2) -- cycle;
\fill[gray!20] (2,2) -- (0.0156865,1.75) arc (187.125:352.875:2) -- cycle;

\draw[dashed]  (2,2) circle (2cm);
\draw (2,2) node[circle, fill, inner sep=1pt] {};
\draw (2,2) node[above right] {$q(x)$};
\draw[dotted] ({2 - sqrt(4 - (1.75 - 2)^2)},1.75) -- ({2 + sqrt(4 - (2.25 - 2)^2)},2.25);
\draw[dotted] ({2 - sqrt(4 - (1.75 - 2)^2)},2.25) -- ({2 + sqrt(4 - (2.25 - 2)^2)},1.75);

\draw (2,2) -- ({2 + sqrt(4 - (2.25 - 2)^2) - 1.3}, 2.05);
\draw ({2 + sqrt(4 - (2.25 - 2)^2) - 1.3},2.05) -- ({2 + sqrt(4 - (2.25 - 2)^2) - 1},1.92);
\draw ({2 + sqrt(4 - (2.25 - 2)^2) - 1},1.92) -- ({2 + sqrt(4 - (2.25 - 2)^2) - 0.6},2.1);
\draw ({2 + sqrt(4 - (2.25 - 2)^2) - 0.6},2.1) -- ({2 + sqrt(4 - (2.25 - 2)^2) - 0.4},1.91);
\draw ({2 + sqrt(4 - (2.25 - 2)^2) - 0.4},1.91) -- ({2 + sqrt(4 - (2.25 - 2)^2) - 0.1},2.02);
\draw ({2 + sqrt(4 - (2.25 - 2)^2) - 0.1},2.02) -- ({2 + sqrt(4 - (2.25 - 2)^2) + 0.4},1.8);

\draw (2,2) -- ({2 - sqrt(4 - (2.25 - 2)^2) + 1.3}, 1.98);
\draw ({2 - sqrt(4 - (2.25 - 2)^2) + 1.3}, 1.98) -- ({2 - sqrt(4 - (2.25 - 2)^2) + 1},2.05);
\draw ({2 - sqrt(4 - (2.25 - 2)^2) + 1},2.05) -- ({2 - sqrt(4 - (2.25 - 2)^2) + 0.6},2.1);
\draw ({2 - sqrt(4 - (2.25 - 2)^2) + 0.6},2.1) -- ({2 - sqrt(4 - (2.25 - 2)^2) + 0.4},1.91);
\draw ({2 - sqrt(4 - (2.25 - 2)^2) + 0.4},1.91) -- ({2 - sqrt(4 - (2.25 - 2)^2) + 0.1},2.07);
\draw ({2 - sqrt(4 - (2.25 - 2)^2) + 0.1},2.07) -- ({2 - sqrt(4 - (2.25 - 2)^2) - 0.4},1.9);

\draw (2,1) node {$I_{r}(q(x))$};
\draw (2,3) node {$U_{r}(q(x))$};

\draw (3.5,3.2) node[above right] {$B(q(x),r)$};

\draw (4.3,1.8) node[right] {$\partial D$};

\draw (2,0) node[below] {\phantom{$\Gamma_{r}(p(x),\varepsilon)$}};

\end{tikzpicture}
\end{center}

\noindent We define the {\bf Inner} and {\bf Outer} cones of $B(q(x), r)$ as follows
\begin{eqnarray}
I_{r}(q(x)) & := & \left\{ y \ : \ (y - q(x)) \cdot v(q(x)) >  \varepsilon |y - q(x) | \right\} \cap B(q(x),r), \\
U_{r}(q(x)) & := & \left\{ y \ : \ (y - q(x)) \cdot v(q(x)) < - \varepsilon |y - q(x) | \right\} \cap B(q(x),r).
\end{eqnarray}

\noindent Note, for $x \in \mathcal{G}$, we have $$\Gamma_r (q(x), \varepsilon) \subset I_{r}(q(x)) \subset D \subset U_r^{c} (q(x)).$$

\noindent It is shown in \cite{Ban2} that for any $x \in \mathcal{G}$ there exists a half-space $H^{*} (x)$ such that:
\begin{equation}
x \in H^{*} (x), \hspace{0.5in} \delta_{H^{*}(x)} (x) = \delta_{D} (x) \hspace{0.5in} I_{r}(q(x)) \subseteq H^{*}(x) \subseteq U^{c}_{r}(q(x)). 
\end{equation}

\begin{center}
\begin{tikzpicture}
\fill[gray!20] (2,2) -- (3.984313,2.25) arc (7.125:172.875:2) -- cycle;
\fill[gray!20] (2,2) -- (0.0156865,1.75) arc (187.125:352.875:2) -- cycle;

\draw[dashed]  (2,2) circle (2cm);
\draw (2,2) node[circle, fill, inner sep=1pt] {};
\draw[dotted] ({2 - sqrt(4 - (1.75 - 2)^2)},1.75) -- ({2 + sqrt(4 - (2.25 - 2)^2)},2.25);
\draw[dotted] ({2 - sqrt(4 - (1.75 - 2)^2)},2.25) -- ({2 + sqrt(4 - (2.25 - 2)^2)},1.75);

\draw (2,2) -- ({2 + sqrt(4 - (2.25 - 2)^2) - 1.3}, 2.05);
\draw ({2 + sqrt(4 - (2.25 - 2)^2) - 1.3},2.05) -- ({2 + sqrt(4 - (2.25 - 2)^2) - 1},1.92);
\draw ({2 + sqrt(4 - (2.25 - 2)^2) - 1},1.92) -- ({2 + sqrt(4 - (2.25 - 2)^2) - 0.6},2.1);
\draw ({2 + sqrt(4 - (2.25 - 2)^2) - 0.6},2.1) -- ({2 + sqrt(4 - (2.25 - 2)^2) - 0.4},1.91);
\draw ({2 + sqrt(4 - (2.25 - 2)^2) - 0.4},1.91) -- ({2 + sqrt(4 - (2.25 - 2)^2) - 0.1},2.02);
\draw ({2 + sqrt(4 - (2.25 - 2)^2) - 0.1},2.02) -- ({2 + sqrt(4 - (2.25 - 2)^2) + 0.4},1.8);

\draw (2,2) -- ({2 - sqrt(4 - (2.25 - 2)^2) + 1.3}, 1.98);
\draw ({2 - sqrt(4 - (2.25 - 2)^2) + 1.3}, 1.98) -- ({2 - sqrt(4 - (2.25 - 2)^2) + 1},2.05);
\draw ({2 - sqrt(4 - (2.25 - 2)^2) + 1},2.05) -- ({2 - sqrt(4 - (2.25 - 2)^2) + 0.6},2.1);
\draw ({2 - sqrt(4 - (2.25 - 2)^2) + 0.6},2.1) -- ({2 - sqrt(4 - (2.25 - 2)^2) + 0.4},1.91);
\draw ({2 - sqrt(4 - (2.25 - 2)^2) + 0.4},1.91) -- ({2 - sqrt(4 - (2.25 - 2)^2) + 0.1},2.07);
\draw ({2 - sqrt(4 - (2.25 - 2)^2) + 0.1},2.07) -- ({2 - sqrt(4 - (2.25 - 2)^2) - 0.4},1.9);

\draw (1.9,0.9) node[circle, fill, inner sep=1pt] {};
\draw (1.9,0.9) node[right] {$x$};

\draw[thick] (-0.7, {0.04*(-0.7 - 2) + 2}) -- (4.7, {0.04*(4.7 - 2) + 2});

\draw (4.7,{0.04*(4.7 - 2) + 2}) node[right] {$\partial H^{*}(x)$};

\draw[<->] (1.9,0.9) -- node[right] {$\delta_{D}(x) = \delta_{H^{*}(x)} (x)$} (1.856, {(-1/0.04)*(1.856 - 1.9) + 0.9}); 

\end{tikzpicture}
\end{center}

\*

\section{Proof of the Main Theorem} \* \vspace{0.1in}

The transition densities of isotropic processes killed upon exiting a domain $D$ are given by the Hunt formula
\begin{equation}
p_{D}(t, x, y) = p_{t} (y - x) - r_{D}(t, x, y).
\end{equation}

\noindent It follows that
\begin{eqnarray}
-\int_{D} r_{D} (t, x, x) dx & = & \int_{D} p_{D}(t, x, x) \hspace{0.02in} dx - \int_{D} p_{t} (0) \hspace{0.02in} dx \nonumber \\
& = & Z_{D}(t) - p_{t}(0) |D|.
\end{eqnarray}

\noindent Hence in order to prove Theorem~\ref{Main} it is sufficient to show that for an arbitrary $\varepsilon > 0$ there exists a $t_0 > 0$ such that for any $0 < t < t_{0}$ we have
\begin{equation}
\left| \int_{D} r_{D} (t, x, x) \hspace{0.02in} dx - C_{\mathbb{H}}(t) \mathcal{H}^{d-1}(\partial D) \right| \leq c(\varepsilon) T(t)^{1-d},
\end{equation}
where $c(\varepsilon) \to 0$ as $\varepsilon \to 0$.

\noindent We need to estimate $$ \int_{D} r_{D}(t, x, x) \hspace{0.02in} dx.$$

\noindent Fix $0 < \varepsilon <1/4$. Let us define $G \subset \partial D$ to be the $(\varepsilon,r)$-good set as described above in Lemma~\ref{Good}. Let $\mathcal{G}$ be the corresponding good subset of $D$. Then we divide $D$ into the following domains
\begin{eqnarray}
D_1 & = & \left\{ x \in D \backslash \mathcal{G} \ : \ \delta_{D} (x) < s \right\}, \nonumber \\
D_2 & = & \left\{ x \in D \cap \mathcal{G} \ : \ \delta_{D} (x) < s \right\}, \nonumber \\
D_3 & = & \left\{ x \in D \ : \ \delta_{D} (x) \geq s \right\}, \nonumber 
\end{eqnarray}
\begin{center}
\resizebox{1.85in}{1.1in}{
\includegraphics{myfig.tikz}
}

\vspace{-0.98in}

\resizebox{1.55in}{0.78in}{
\includegraphics{myfig1.tikz}
}

\vspace{-0.75in} \hspace{1.8in}
\begin{tikzpicture}
	\draw (2,2) node {$D_3$};
	\draw[pattern = north west lines] (5,0.5) -- (5.4,0.5) -- (5.4,0.9) -- (5,0.9) -- (5,0.5);
	\draw (5.4,0.7) node[right] {$= D_{1} \cup D_{2}$};
	\draw (4.7, 2.5) node{$D$};
\end{tikzpicture}
\end{center}
where $s$ must be smaller than the $s_{0}$ given in Lemma \ref{Good}. For small enough $t$ we can let $s = T(t)/\sqrt{\varepsilon}$.

%\vfill

%\pagebreak

%\begin{center}
%\begin{tikzpicture}
%	\draw[pattern = north west lines]  plot[smooth, tension=0.7] coordinates {(0,0.1) (0.5,-0.2) (3,0.35) (4.05,-0.2) (5,0.29) (3,2) (1,1.8) (0,0.1)};
%	\draw  plot[smooth, tension=0.7] coordinates {(0,0.1) (0.5,-0.2) (3,0.35) (4.05,-0.2) (5,0.29) (3,2) (1,1.8) (0,0.1)};
%	\draw[fill, gray!20]  plot[smooth, tension=0.7] coordinates {(0.4,0.2) (0.8,0.2) (3,0.7) (4.15,0.1) (4.6,0.45) (2.8,1.75) (1.3,1.6) (0.4,0.2)};
%	\draw  plot[smooth, tension=0.7] coordinates {(0.4,0.2) (0.8,0.2) (3,0.7) (4.15,0.1) (4.6,0.45) (2.8,1.75) (1.3,1.6) (0.4,0.2)};
%	
%	\draw (2.1,1.2) node{$D_{3}$};
%	\draw (4.5,1.7) node{$D$};
%	
%	\draw[pattern = north west lines] (6,0.5) -- (6.4,0.5) -- (6.4,0.9) -- (6,0.9) -- (6,0.5);
%	\draw (6.4,0.7) node[right] {$= D_{1} \cup D_{2}$};
%\end{tikzpicture}
%\end{center}

\*

\subsection{The domain $D_{1}$:} \* \vspace{0.1in}

The following estimate for $r_{D} (t, x, y)$ comes from Lemma~2.4 of \cite{Bog5}.
\begin{lemma} \label{Lemma3}
Suppose $\psi \in WLSC(\underline{\alpha}, \theta)$ and $T(t) < 1/\theta$. Then
\begin{eqnarray}
r_{D} (t, x, y) \leq C \left\{T(t)^{-d} \wedge \frac{t}{\delta_{D}^{d}(x) V^{2} \left( \delta_{D}(x) \right)} \right\}.
\end{eqnarray}
\end{lemma}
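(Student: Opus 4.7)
The plan is to prove the two upper bounds separately; the minimum yields the stated estimate. The first bound, $r_D(t,x,y) \le C T(t)^{-d}$, is a soft on-diagonal estimate that ignores the geometry, while the second, $r_D(t,x,y) \le C t / (\delta_D^d(x) V^2(\delta_D(x)))$, exploits the smallness of the exit event $\{\tau_D < t\}$ when $x$ is far from $\partial D$.

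For the first bound, Hunt's formula together with the non-negativity of $p_D$ gives $r_D(t,x,y) \le p_t(y-x)$, and unimodality of $p_t$ upgrades this to $r_D(t,x,y) \le p_t(0)$. The standard on-diagonal estimate for unimodal processes under $\psi \in WLSC(\underline{\alpha}, \theta)$, valid for $T(t) < 1/\theta$, yields $p_t(0) \le C T(t)^{-d}$. This is obtained by inserting the $WLSC$ inequality into the Fourier representation $p_t(0) = (2\pi)^{-d} \int e^{-t\psi(\xi)}\, d\xi$, changing variables to $\eta = T(t)\xi$, and using $V^2(T(t)) = t$ to balance the exponent.

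For the second bound I would use the Ikeda--Watanabe (L\'{e}vy-system) representation
$$r_D(t,x,y) = \int_0^t \int_D \int_{D^c} p_D(s,x,u)\, \nu(u - z)\, p_{t-s}(z - y)\, dz\, du\, ds,$$
which writes the remainder as a single-jump expectation from $D$ into $D^c$. Since $\nu$ is radially non-increasing and $|u - z| \ge \delta_D(u)$, we have $\nu(u - z) \le \nu(\delta_D(u))$, and the $z$-integral of $p_{t-s}(z - y)$ over $D^c$ is at most $1$. This reduces the task to estimating $\int_D G_D^t(x, u)\, \nu(\delta_D(u))\, du$. Combining the WLSC-driven density estimate $\nu(r) \asymp 1/(r^d V^2(r))$ with Green-function mass bounds of the form $\int_D G_D^t(x, u) f(\delta_D(u))\, du \lesssim t f(\delta_D(x))$ for suitable $f$ (valid because the Green-function mass concentrates near $x$) produces the desired bound $C t / (\delta_D^d(x) V^2(\delta_D(x)))$.

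The main obstacle is the potential singularity of $p_{t-s}$ as $s \uparrow t$: the naive pointwise bound $p_{t-s}(z-y) \le p_{t-s}(0) \lesssim T(t-s)^{-d}$ diverges and cannot be used uniformly in $s$. Integrating $z$ first absorbs this singularity into the unit mass of $p_{t-s}$; alternatively one may split $[0, t]$ at $s = t/2$, using WLSC-scaling of $T$ to keep $T(t-s)^{-d} \lesssim T(t)^{-d}$ on $[0, t/2]$ and the probability bound $\int_{D^c} p_{t-s}(z - y)\, dz \le 1$ on $[t/2, t]$.
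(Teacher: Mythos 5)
First, note that the paper does not prove this lemma at all: it is quoted verbatim from Lemma~2.4 of \cite{Bog5}, so there is no internal proof to compare against. Your first bound is fine: $r_D(t,x,y)\le p_t(y-x)\le p_t(0)$ by Hunt's formula and unimodality, and $p_t(0)\le CT(t)^{-d}$ follows from the Fourier integral together with $WLSC$ and $t\,\psi(1/T(t))\simeq 1$, exactly as you describe.

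The second bound, however, has a genuine gap. After the Ikeda--Watanabe representation you bound $\nu(u-z)\le\nu(\delta_D(u))$ and integrate out $p_{t-s}(z-y)$ over $D^c$, reducing the problem to $\int_D G_D^t(x,u)\,\nu(\delta_D(u))\,du$. This decoupling is fatally lossy: the integrand blows up non-integrably as $u\to\partial D$, so the reduced quantity is $+\infty$ and no bound of the form $t\,\nu(\delta_D(x))$ can come out of it. Concretely, for the isotropic $\alpha$-stable process on a smooth domain one has $G_D^t(x,u)\asymp \delta_D(u)^{\alpha/2}\,t^{3/2}/|x-u|^{d+\alpha}$ for $u$ near the boundary and $x$ fixed in the interior, while $\nu(\delta_D(u))\asymp\delta_D(u)^{-d-\alpha}$, so the product integrates like $\int_0 \rho^{-d-\alpha/2}\,d\rho=\infty$. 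For the same reason the auxiliary claim $\int_D G_D^t(x,u)f(\delta_D(u))\,du\lesssim t\,f(\delta_D(x))$ is false for the decreasing unbounded $f(r)=1/(r^dV^2(r))$: the Green mass does not avoid the boundary layer, and the weight there dominates. Your remarks about the singularity of $p_{t-s}$ as $s\uparrow t$ address a different (and harmless) issue and do not repair this. The efficient route, which is essentially the proof in \cite{Bog5}, avoids the L\'evy system altogether: since $|X(\tau_D)-y|\ge\delta_D(y)$ almost surely on $\{\tau_D<t\}$, Lemma~\ref{Heat1} gives $p_{t-\tau_D}\left(X(\tau_D)-y\right)\le C\,(t-\tau_D)/\left(\delta_D(y)^d V^2(\delta_D(y))\right)\le C\,t/\left(\delta_D(y)^d V^2(\delta_D(y))\right)$, hence $r_D(t,x,y)\le C\,t/\left(\delta_D(y)^d V^2(\delta_D(y))\right)$, and the stated bound in terms of $\delta_D(x)$ follows from the symmetry $r_D(t,x,y)=r_D(t,y,x)$; combining with your first bound finishes the lemma.
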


\noindent By assumption $\psi \in WLSC \left(\underline{\alpha}, \theta \right)$ and so, for us, this lemma implies that if $T(t) < 1/\theta$, then
\begin{eqnarray}
\int_{D_{1}} r_{D} (t, x, x) \hspace{0.02in} dx & \leq & C \int_{D_{1}} T(t)^{-d} \hspace{0.02in} dx \\
& = & C T(t)^{-d} |D_{1}|.
\end{eqnarray}

\noindent But, by Lemma~\ref{Good}, we know that the measure of the set of bad points near the boundary is small. Hence if $T(t) < 1/\theta$, then
\begin{equation}
\int_{D_{1}} r_{D} (t, x, x) \hspace{0.02in} dx \leq C(\partial D) \varepsilon s T(t)^{-d} \leq \boxed{C \sqrt{\varepsilon} T(t)^{1-d}}, \label{EqA}
\end{equation}
where $C$ is a constant depending on $d$, $\underline{\alpha}$, and $\partial D$.

\*

\subsection{The domain $D_{3}$:} \* \vspace{0.1in}

By assumption $\psi \in WLSC(\underline{\alpha}, \theta)$, and so, again by Lemma~\ref{Lemma3}, if $T(t) < 1/\theta$, then
\begin{equation}
\int_{D_{3}} r_{D} (t, x, x) \hspace{0.02in} dx \leq C T(t)^{-d} \int_{D_3} \left\{ 1 \wedge \frac{T(t)^{d}}{\delta^d_{D} (x)} \frac{V^2(T(t))}{V^2(\delta_{D}(x))} \right\} dx.
\end{equation}

\noindent Next, our Potter-like bound in Lemma~\ref{Lemma6} tells us that if $T(t) < 1/\theta$, then
\begin{equation}
\int_{D_{3}} r_{D} (t, x, x) \hspace{0.02in} dx \leq C T(t)^{-d} \int_{D_3} \left\{ 1 \wedge  \frac{T(t)^{d}}{\delta^d_{D} (x)} \left( \frac{T(t)^{\underline{\alpha}}}{\delta_{D}^{\underline{\alpha}} (x)} \vee \frac{T(t)^{\overline{\alpha}}}{\delta_{D}^{\overline{\alpha}} (x)} \right) \right\} dx.
\end{equation}

\noindent By definition of $D_{3}$, for any $x \in D_{3}$ we have $\delta_{D}(x) \geq s = T(t)/\sqrt{\varepsilon}$. Or equivalently $1 \leq \frac{\delta_{D}(x)}{T(t)} \sqrt{\varepsilon}$. Hence
\begin{eqnarray}
\int_{D_{3}} r_{D} (t, x, x) \hspace{0.02in} dx & \leq & C T(t)^{-d} \int_{D_3} \left\{ 1 \wedge  \sqrt{\varepsilon} \frac{T(t)^{d-1}}{\delta^{d-1}_{D} (x)} \left( \frac{T(t)^{\underline{\alpha}}}{\delta_{D}^{\underline{\alpha}} (x)} \vee \frac{T(t)^{\overline{\alpha}}}{\delta_{D}^{\overline{\alpha}} (x)} \right) \right\} dx \\
& \leq & C T(t)^{-d} \int_{D} \left\{ 1 \wedge \sqrt{\varepsilon} \frac{T(t)^{d + \underline{\alpha} - 1}}{\delta_{D}^{d + \underline{\alpha} - 1}(x)} + 1 \wedge \sqrt{\varepsilon} \frac{T(t)^{d + \overline{\alpha} - 1}}{\delta_{D}^{d + \overline{\alpha} - 1}(x)} \right\} dx \\
& = & C T(t)^{1-d} \frac{1}{T(t)} \int_{D} \left\{ 1 \wedge \sqrt{\varepsilon} \left( \frac{\delta_{D}(x)}{T(t)} \right)^{-d - \underline{\alpha} + 1} + 1 \wedge \sqrt{\varepsilon} \left( \frac{\delta_{D}(x)}{T(t)} \right)^{-d - \overline{\alpha} + 1} \right\} dx. \label{Integ}
\end{eqnarray} 

We are now in a position to apply the following important proposition from \cite{Ban2}:
\begin{proposition} \label{Prop1}
Let $D \subset \mathbb{R}^d$ be a bounded Lipschitz domain. Suppose that $f : (0,\infty) \rightarrow \mathbb{R}$ is continuous and satisfies $f(s) \leq c \left( 1 \wedge s^{-\beta} \right)$, $s > 0$, for some $\beta > 1$, and suppose that for any $0 < R_1 < R_2 < \infty$, $f(s)$ is Lipschitz on $\left[ R_1, R_2 \right]$. Then
\begin{equation}
\lim_{\eta \to 0^{+}} \frac{1}{\eta} \int_{D} f\left(\frac{\delta_{D}(x)}{\eta} \right) dx = \mathcal{H}^{d-1} (\partial D) \int_{0}^\infty f(s) \hspace{0.02in} ds.
\end{equation}
\end{proposition}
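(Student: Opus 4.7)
The plan is to deduce Proposition~\ref{Prop1} from the coarea formula combined with a convergence statement for the surface measures of the inner parallel sets of $D$. Since $\delta_D$ is $1$-Lipschitz on $\mathbb{R}^d$ with $|\nabla \delta_D| = 1$ almost everywhere on $D$, the coarea formula gives
\begin{equation*}
\int_D f\!\left(\frac{\delta_D(x)}{\eta}\right) dx = \int_0^\infty f\!\left(\frac{s}{\eta}\right) \mathcal{H}^{d-1}\bigl(\{\delta_D = s\} \cap D\bigr)\, ds,
\end{equation*}
and the substitution $u = s/\eta$ transforms the quantity of interest into
\begin{equation*}
\frac{1}{\eta} \int_D f\!\left(\frac{\delta_D(x)}{\eta}\right) dx = \int_0^\infty f(u)\, \mathcal{H}^{d-1}\bigl(\{\delta_D = \eta u\} \cap D\bigr)\, du.
\end{equation*}

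The core geometric step is then to verify that for each fixed $u > 0$,
\begin{equation*}
\lim_{\eta \to 0^+} \mathcal{H}^{d-1}\bigl(\{\delta_D = \eta u\} \cap D\bigr) = \mathcal{H}^{d-1}(\partial D).
\end{equation*}
I would prove this using the good/bad decomposition of Lemma~\ref{Good}. Near each good point $q \in G$ the boundary is locally a Lipschitz graph whose slope is controlled by a function of $\varepsilon$, so for small normal distance $\tau$ the level set $\{\delta_D = \tau\}$ is a graph parallel to $\partial D$ whose $(d-1)$-area differs from that of the boundary patch $\partial D \cap B(q,r)$ by $o_\varepsilon(1) + o_\tau(1)$. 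Summing over the good part and using Lemma~\ref{Good} to bound the bad part by $O(\varepsilon)$, then letting $\eta \to 0^+$ followed by $\varepsilon \to 0$, yields the convergence.

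The passage to the limit under the integral sign is then justified by dominated convergence. The decay hypothesis $f(u) \leq c(1 \wedge u^{-\beta})$ with $\beta > 1$ provides an $L^1$ majorant in $u$ once we know that $\mathcal{H}^{d-1}\bigl(\{\delta_D = s\} \cap D\bigr)$ is uniformly bounded in $s \geq 0$; this uniform bound follows from the boundedness of $D$ and the global Lipschitz description of $\partial D$ (a finite cover by Lipschitz graph patches whose parallel translates have comparable $(d-1)$-area). The main obstacle is the Minkowski-type convergence of the level-set areas: for smooth or $R$-smooth domains it drops out of the tubular neighborhood theorem, but in the merely Lipschitz setting it requires the careful good/bad surgery supplied by Lemma~\ref{Good}, since level sets of $\delta_D$ can a priori behave wildly in the vicinity of boundary corners.
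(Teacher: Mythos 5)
Your coarea reduction is fine ($\delta_D$ is $1$-Lipschitz with $|\nabla\delta_D|=1$ a.e.\ in $D$), but the argument breaks at what you yourself call the core geometric step. First, the appeal to Lemma~\ref{Good} is a category error: that lemma bounds the \emph{Lebesgue measure} of the bad collar $\{x\in D:\delta_D(x)<s\}\setminus\mathcal{G}$, and a volume bound gives no control whatsoever on $\mathcal{H}^{d-1}\bigl(\{\delta_D=\eta u\}\cap(\text{bad set})\bigr)$ for a \emph{fixed} level --- a set of volume $\varepsilon s$ can contain pieces of hypersurface of arbitrarily large (even infinite) $(d-1)$-measure. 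Running the volume bound back through the coarea formula only controls $\int_0^s\mathcal{H}^{d-1}(\{\delta_D=\tau\}\cap\text{bad})\,d\tau$, i.e.\ the level-set areas in average or for a.e.\ $\tau$, whereas your dominated-convergence scheme needs the limit $\mathcal{H}^{d-1}(\{\delta_D=\eta u\}\cap D)\to\mathcal{H}^{d-1}(\partial D)$ for \emph{every} fixed $u$ and every sequence $\eta\to0$. That pointwise statement is strictly stronger than the one-sided Minkowski content result that the Lipschitz hypothesis readily yields, and nothing in your sketch proves it: the level sets of $\delta_D$ are not "graphs parallel to $\partial D$" over a good patch (they are envelopes determined by the whole boundary, can have several sheets over a coordinate cylinder, and interact with the cut locus). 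The same objection applies to the asserted uniform bound $\sup_{s}\mathcal{H}^{d-1}(\{\delta_D=s\}\cap D)<\infty$ needed for your $L^1$ majorant; it is plausible (e.g.\ via the exterior ball property of $\{\delta_D\ge s\}$ or semiconcavity of $\delta_D$), but it does not follow from "parallel translates of Lipschitz graph patches" and is not proved.

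Note also that the paper does not prove this proposition at all; it imports it from \cite{Ban2}, where the argument deliberately avoids level sets. There one works with the distribution function of $\delta_D$: the Lipschitz-domain fact $\lim_{s\to0}|\{x\in D:\delta_D(x)<s\}|/s=\mathcal{H}^{d-1}(\partial D)$ (one-sided Minkowski content --- this is exactly where the good/bad decomposition and the volume estimate of Lemma~\ref{Good} are the right tools) gives the conclusion for indicators $f=\mathbf{1}_{[0,a]}$, and the general case follows from the decay $f(s)\le c(1\wedge s^{-\beta})$, $\beta>1$, together with the Lipschitz-on-compacts hypothesis, by step-function approximation or a Stieltjes integration by parts. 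If you want to keep the coarea route, you must either prove the pointwise convergence of level-set areas for Lipschitz domains (a genuinely harder statement) or retreat to the averaged-in-$\tau$ version via Fubini, at which point you have essentially reproduced the distribution-function proof.
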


\noindent Letting $\eta = T(t)$ and $f(s) = 1 \wedge \sqrt{\varepsilon} s^{-d-\underline{\alpha}+1}$ and $f(s) = 1 \wedge \sqrt{\varepsilon} s^{-d-\overline{\alpha}+1}$, respectively, we can apply Proposition~\ref{Prop1} to both of the integrals in (\ref{Integ}). Thus for small values of $t$ we get
\begin{eqnarray}
\int_{D_{3}} r_{D} (t, x, x) \hspace{0.02in} dx & \leq & C T(t)^{1 - d}\mathcal{H}^{d - 1} (\partial D) \int_{0}^{\infty} \left\{ \left( 1 \wedge \sqrt{\varepsilon} r^{-d - \underline{\alpha} + 1} \right) + \left( 1 \wedge \sqrt{\varepsilon} r^{-d-\overline{\alpha}+1} \right) \right\} dr.
\end{eqnarray}

\noindent Using substitution this becomes
\begin{eqnarray}
\int_{D_{3}} r_{D} (t, x, x) \hspace{0.02in} dx & \leq & C(\partial D) T(t)^{1 - d} \left\{ \varepsilon^{\frac{1}{2(d + \underline{\alpha}-1)}} \int_{0}^{\infty} \left( 1 \wedge r^{-d - \underline{\alpha} + 1} \right) dr + \varepsilon^{\frac{1}{2(d + \overline{\alpha}-1)}} \int_{0}^{\infty} \left( 1 \wedge r^{-d-\overline{\alpha}+1} \right) dr \right\} \nonumber \\
& \leq & \boxed{C T(t)^{1-d} \left( \varepsilon^{\frac{1}{2(d + \underline{\alpha}-1)}} + \varepsilon^{\frac{1}{2(d + \overline{\alpha}-1)}} \right) }. \label{EqB}
\end{eqnarray}

\noindent This covers domains $D_1$ and $D_3$.

\*

\subsection{The domain $D_{2}$:} \* \vspace{0.1in}

It remains to show that $r_{D}(t, x, x)$ is comparable to $r_{H^{*}}(t, x, x)$ for $x \in D_{2}$. 

\noindent Suppose $x \in D_{2} \subset \mathcal{G}$. Let $q(x)$ be as above. Then $x \in \Gamma_{r} \left( q(x), \varepsilon \right)$. For the purposes of brevity we will use the folowing notation $\mathcal{I} := I_r(q(x))$ and $\mathcal{U}^{c} := U^c_r(q(x))$.
\begin{center}
\begin{tikzpicture}
\fill[gray!20] (2,3) -- (0.0156865,2.75) arc (187.125:352.875:2) -- cycle;

\draw[dashed]  (2,3) circle (2cm);
\draw (2,3) node[circle, fill, inner sep=1pt] {};
\draw (2,3) node[above right] {$q(x)$};
\draw[dotted] ({2 - sqrt(4 - (1.75 - 2)^2)},2.75) -- ({2 + sqrt(4 - (2.25 - 2)^2)},3.25);
\draw[dotted] ({2 - sqrt(4 - (1.75 - 2)^2)},3.25) -- ({2 + sqrt(4 - (2.25 - 2)^2)},2.75);

\draw (2,3) -- ({2 + sqrt(4 - (2.25 - 2)^2) - 1.3}, 3.05);
\draw ({2 + sqrt(4 - (2.25 - 2)^2) - 1.3},3.05) -- ({2 + sqrt(4 - (2.25 - 2)^2) - 1},2.92);
\draw ({2 + sqrt(4 - (2.25 - 2)^2) - 1},2.92) -- ({2 + sqrt(4 - (2.25 - 2)^2) - 0.6},3.1);
\draw ({2 + sqrt(4 - (2.25 - 2)^2) - 0.6},3.1) -- ({2 + sqrt(4 - (2.25 - 2)^2) - 0.4},2.91);
\draw ({2 + sqrt(4 - (2.25 - 2)^2) - 0.4},2.91) -- ({2 + sqrt(4 - (2.25 - 2)^2) - 0.1},3.02);
\draw ({2 + sqrt(4 - (2.25 - 2)^2) - 0.1},3.02) -- ({2 + sqrt(4 - (2.25 - 2)^2) + 0.4},2.8);
\draw ({2 + sqrt(4 - (2.25 - 2)^2) + 0.4},2.8) -- (5.1,2.8);

\draw (2,3) -- ({2 - sqrt(4 - (2.25 - 2)^2) + 1.3}, 2.98);
\draw ({2 - sqrt(4 - (2.25 - 2)^2) + 1.3}, 2.98) -- ({2 - sqrt(4 - (2.25 - 2)^2) + 1},3.05);
\draw ({2 - sqrt(4 - (2.25 - 2)^2) + 1},3.05) -- ({2 - sqrt(4 - (2.25 - 2)^2) + 0.6},3.1);
\draw ({2 - sqrt(4 - (2.25 - 2)^2) + 0.6},3.1) -- ({2 - sqrt(4 - (2.25 - 2)^2) + 0.4},2.91);
\draw ({2 - sqrt(4 - (2.25 - 2)^2) + 0.4},2.91) -- ({2 - sqrt(4 - (2.25 - 2)^2) + 0.1},3.07);
\draw ({2 - sqrt(4 - (2.25 - 2)^2) + 0.1},3.07) -- ({2 - sqrt(4 - (2.25 - 2)^2) - 0.4},2.9);
\draw ({2 - sqrt(4 - (2.25 - 2)^2) - 0.4},2.9) -- (-0.8,3.1);

\draw (2,2) node {$\mathcal{I} := I_{r} (q(x))$};

\draw (4.3,2.8) node[below right] {$\partial D$};

\draw (4.9,{0.04*(4.7 - 2) + 3}) node[above] {$\partial H^{*}(x)$};
\draw[thick] (-0.7, {0.04*(-0.7 - 2) + 3}) -- (4.9, {0.04*(4.9 - 2) + 3});

\draw (0, -0.0);

\end{tikzpicture} \hspace{0.5in}
\begin{tikzpicture}
\fill[gray!20] (-1.5,-1) -- (5.5,-1) -- (5.5,5) -- (-1.5,5);
\fill[white] (2,2) -- (3.984313,2.25) arc (7.125:172.875:2) -- cycle;

\draw[dashed]  (2,2) circle (2cm);
\draw (2,2) node[circle, fill, inner sep=1pt] {};
\draw (2,2) node[above right] {$q(x)$};
\draw[dotted] ({2 - sqrt(4 - (1.75 - 2)^2)},1.75) -- ({2 + sqrt(4 - (2.25 - 2)^2)},2.25);
\draw[dotted] ({2 - sqrt(4 - (1.75 - 2)^2)},2.25) -- ({2 + sqrt(4 - (2.25 - 2)^2)},1.75);

\draw (2,2) -- ({2 + sqrt(4 - (2.25 - 2)^2) - 1.3}, 2.05);
\draw ({2 + sqrt(4 - (2.25 - 2)^2) - 1.3},2.05) -- ({2 + sqrt(4 - (2.25 - 2)^2) - 1},1.92);
\draw ({2 + sqrt(4 - (2.25 - 2)^2) - 1},1.92) -- ({2 + sqrt(4 - (2.25 - 2)^2) - 0.6},2.1);
\draw ({2 + sqrt(4 - (2.25 - 2)^2) - 0.6},2.1) -- ({2 + sqrt(4 - (2.25 - 2)^2) - 0.4},1.91);
\draw ({2 + sqrt(4 - (2.25 - 2)^2) - 0.4},1.91) -- ({2 + sqrt(4 - (2.25 - 2)^2) - 0.1},2.02);
\draw ({2 + sqrt(4 - (2.25 - 2)^2) - 0.1},2.02) -- ({2 + sqrt(4 - (2.25 - 2)^2) + 0.4},1.8);
\draw ({2 + sqrt(4 - (2.25 - 2)^2) + 0.4},1.8) -- (5.5,1.8);

\draw (2,2) -- ({2 - sqrt(4 - (2.25 - 2)^2) + 1.3}, 1.98);
\draw ({2 - sqrt(4 - (2.25 - 2)^2) + 1.3}, 1.98) -- ({2 - sqrt(4 - (2.25 - 2)^2) + 1},2.05);
\draw ({2 - sqrt(4 - (2.25 - 2)^2) + 1},2.05) -- ({2 - sqrt(4 - (2.25 - 2)^2) + 0.6},2.1);
\draw ({2 - sqrt(4 - (2.25 - 2)^2) + 0.6},2.1) -- ({2 - sqrt(4 - (2.25 - 2)^2) + 0.4},1.91);
\draw ({2 - sqrt(4 - (2.25 - 2)^2) + 0.4},1.91) -- ({2 - sqrt(4 - (2.25 - 2)^2) + 0.1},2.07);
\draw ({2 - sqrt(4 - (2.25 - 2)^2) + 0.1},2.07) -- ({2 - sqrt(4 - (2.25 - 2)^2) - 0.4},1.9);
\draw ({2 - sqrt(4 - (2.25 - 2)^2) - 0.4},1.9) -- (-1.5,2.3);

\draw (0,4.3) node {$\mathcal{U}^{c} := U^{c}_{r} (q(x))$};

\draw (4.3,1.8) node[below right] {$\partial D$};

\draw (4.9,{0.04*(4.7 - 2) + 2}) node[above] {$\partial H^{*}(x)$};
\draw[thick] (-1.5, {0.04*(-1.5 - 2) + 2}) -- (5.5, {0.04*(5.5 - 2) + 2});

\fill[gray!40] (2,2) -- (1.75,0.02) arc (262.82:277.18:2) -- cycle;

\draw[densely dotted] (2,2) -- (1.75, {2 - sqrt(4 - (1.75 - 2)^2)});
\draw[densely dotted] (2,2) -- (2.25, {2 - sqrt(4 - (1.75 - 2)^2)});

\draw (2,0) node[below] {$\Gamma_{r}(q(x),\varepsilon)$};

\draw (2.05,0.9) node[circle, fill, inner sep=1pt] {};
\draw (2.1,0.9) node[right] {$x$};

\end{tikzpicture}
\end{center}

Notice that $$H^{*}(x) \subseteq \mathcal{U}^{c} \hspace{0.2in} \text{ and } \hspace{0.2in} \mathcal{I} \subseteq D.$$

\noindent Hence
\begin{eqnarray}
\left| r_{D} (t, x, x) - r_{H^{*}(x)} (t, x, x)  \right| & \leq & r_{\mathcal{I}} (t, x, x) - r_{\mathcal{U}^{c}} (t, x, x). \label{HuntPres}
\end{eqnarray}

%Before proceeding we remark that since $x \in \Gamma_{r} \left( q(x), \varepsilon \right)$ and $B \left( q(x), r \right) \cap \left\{ x \ : \ \left| (x - q(x)) \cdot v(q) \right| < \varepsilon |x - q| \right\}$ we have
%\begin{eqnarray}
 %\left| q(x) - x \right| & \leq & \delta_{D} (x) + 2 \left| q(x) - x \right| \cos \left( 2\varphi_{\varepsilon} - \frac{\pi}{2} \right) \nonumber \\
 %& = & \delta_{D} (x) + 2 \left| q(x) - x \right| \sin \left( \varphi_{\varepsilon} \right) \cos \left( \varphi_{\varepsilon} \right) \nonumber \\
 %& = & \delta_{D} (x) + 4\varepsilon \sqrt{1 - \varepsilon^{2}} \left| q(x) - x \right|. \nonumber
 %\end{eqnarray}
%Thus $\left| q(x) - x \right| \leq \delta_{D} (x) / (1-4\varepsilon)$. Also, since $q(x) \in \partial D$, it follows that $\delta_{D}(x) \leq \left| q(x) - x \right|$. Hence if $\varepsilon < 1/4$, then $\delta_{D}(x)$ and $\left| q(x) - x \right|$ are comparable.

\noindent We have the following important proposition:
\begin{proposition} \label{MainProp}%Establish assumptions of proposition
Let $v(q) \in \mathbb{R}^d$ be a unit vector. Assume that $0 < \varepsilon < 1/4$ and $r > 0$. If $x \in \Gamma_{2s}(v(q),\varepsilon)$ and $s = T(t)/\sqrt{\varepsilon} < r/4$, then
\begin{equation}
0 \leq r_{\mathcal{I}} (t, x, x) - r_{\mathcal{U}^{c}} (t, x, x) \leq \frac{\left(\varepsilon^{1 - \underline{\alpha}/2} + \varepsilon^{1 - \overline{\alpha}/2} \right) \vee \sqrt{\varepsilon}}{T(t)^{d}} \left( 1 \wedge \frac{T(t)^{d-1}}{\delta_{\mathcal{I}}^{d-1} (x)}  \frac{V^{2}(T(t))}{V^2\left( \delta_{\mathcal{I}} (x) \right)} \right).
\end{equation}
\end{proposition}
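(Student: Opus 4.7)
The lower bound is immediate from monotonicity: since $\mathcal{I} \subseteq \mathcal{U}^{c}$, the domain ordering of Dirichlet heat kernels gives $p_{\mathcal{I}}(t,x,x) \leq p_{\mathcal{U}^{c}}(t,x,x)$, and the identity $r_{D} = p_{t} - p_{D}$ reverses the inequality. For the upper bound, the plan is to apply the strong Markov property at $\tau_{\mathcal{I}}$, together with the Ikeda-Watanabe description of $(\tau_{\mathcal{I}}, X(\tau_{\mathcal{I}}))$ for the pure-jump process, to obtain
\[ r_{\mathcal{I}}(t,x,x) - r_{\mathcal{U}^{c}}(t,x,x) = p_{\mathcal{U}^{c}}(t,x,x) - p_{\mathcal{I}}(t,x,x) = \int_{0}^{t}\!\!\int_{\mathcal{U}^{c}\setminus\overline{\mathcal{I}}}\!\!\int_{\mathcal{I}} p_{\mathcal{I}}(s,x,y)\,\nu(z-y)\,p_{\mathcal{U}^{c}}(t-s,z,x)\,dy\,dz\,ds. \]
The decisive geometric input is that, inside $B(q(x),r)$, the gap $\mathcal{U}^{c}\setminus\overline{\mathcal{I}}$ is an angular shell of aperture $O(\varepsilon)$ around the hyperplane through $q(x)$ perpendicular to $v(q(x))$, since $\arccos(\varepsilon)\approx \pi/2 - \varepsilon$.

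I would split the time integral at $t/2$. On $[0,t/2]$, estimate $p_{\mathcal{U}^{c}}(t-s,z,x) \leq p_{t-s}(0) \leq C/T(t)^{d}$, using the scaling of $T$ to absorb $T(t/2)$ into $T(t)$; the remaining double integral is then a truncated Poisson kernel $K_{\mathcal{I}}^{t/2}(x,z)$ integrated over $z$ in the gap. On $[t/2,t]$, apply the symmetric bound with the roles of $\mathcal{I}$ and $\mathcal{U}^{c}$ exchanged via $p_{D}(u,x,z)=p_{D}(u,z,x)$. Use Lemma~\ref{Lemma3} to control $p_{\mathcal{I}}(s,x,y)$ and the standard bound $\nu(|z-y|) \lesssim 1/(|z-y|^{d}V^{2}(|z-y|))$ coming from the relation $V^{2}(r)\simeq 1/\psi(1/r)$. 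Parameterizing the shell by angle $\theta\in(\pi/2-O(\varepsilon),\pi/2+O(\varepsilon))$ and radial distance $|z-q(x)|$, the angular integration supplies a bare factor of $\varepsilon$; contributions from $|z-q(x)|>r$ are handled by the constraint $s=T(t)/\sqrt{\varepsilon}<r/4$ and furnish the $\sqrt{\varepsilon}$ alternative.

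The boundary-decay factor $1\wedge (T(t)/\delta_{\mathcal{I}}(x))^{d-1}\,V^{2}(T(t))/V^{2}(\delta_{\mathcal{I}}(x))$ then appears by applying Lemma~\ref{Lemma3} to $p_{\mathcal{I}}$ at scale $\delta_{\mathcal{I}}(x)$: the decay exponent is one power weaker than in Lemma~\ref{Lemma3} precisely because one factor of $T(t)/\delta_{\mathcal{I}}(x)$ has been spent on the angular gap width $\varepsilon$. The main obstacle will be managing the interplay between the two scaling regimes so that both exponents $\underline{\alpha}$ and $\overline{\alpha}$ appear in the final $\varepsilon$-dependence in the required additive form: $WUSC$ controls the small-distance behavior of $\nu(|z-y|)$, $WLSC$ governs scales comparable to $T(t)$, and the Potter-like bound Lemma~\ref{Lemma6} must be invoked separately in each regime so that the sum $\varepsilon^{1-\underline{\alpha}/2}+\varepsilon^{1-\overline{\alpha}/2}$ emerges compatibly with the $1\wedge(\cdot)$ structure on the right-hand side.
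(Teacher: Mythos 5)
Your plan follows the same skeleton as the paper's proof: the Ikeda--Watanabe identity for $p_{\mathcal{U}^{c}}-p_{\mathcal{I}}$, the split of the landing set into the angular gap $(U^{c}\setminus I)\cap B(0,r)$ of aperture $O(\varepsilon)$ and the far region $B^{c}(0,r)$ (the latter indeed produces the $\sqrt{\varepsilon}$ alternative through $s=T(t)/\sqrt{\varepsilon}$), the time split at $t/2$, and Potter-type bounds feeding an angular integration; the lower bound via domain monotonicity is also fine. The genuine gap is in the long-time regime $l\in[t/2,t]$. Your ``symmetric role exchange'' bounds $p_{\mathcal{I}}(l,x,y)$ by an interior estimate with no boundary decay in $y$ (note also that Lemma~\ref{Lemma3} bounds $r_{D}$, not $p_{D}$; what you can invoke is Lemma~\ref{Heat1}), and then integrates $p_{\mathcal{U}^{c}}$ in time. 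What remains is essentially $\int_{\mathrm{gap}} G^{t/2}_{\mathcal{U}^{c}}(x,z)\bigl(\int_{\mathcal{I}}\nu(z-y)\,dy\bigr)dz$, and since $\int_{\mathcal{I}}\nu(z-y)\,dy\lesssim V^{-2}\bigl(\mathrm{dist}(z,\mathcal{I})\bigr)$, the weight near the inner edge of the gap is of order $\mathrm{dist}(z,\mathcal{I})^{-\overline{\alpha}}$ in the worst case; the one-dimensional angular integral then diverges for $\overline{\alpha}\geq 1$ and in any case cannot yield $\varepsilon^{1-\overline{\alpha}/2}$. This is exactly why the paper does not argue this way: it keeps the boundary-decay factor $V(\delta_{\mathcal{I}}(y))/V(T(t))$ from the factorized Dirichlet heat kernel bound on convex sets (Lemma~\ref{Lemma10}), pairs it with $\nu(y-z)\lesssim |y-z|^{-d}V^{-2}(|y-z|)$ and $V(\delta_{\mathcal{I}}(y))\leq V(|y-z|)$ so that the singularity is softened to $V^{-1}(\delta_{\mathcal{I}}(z))$, and is then left with the weighted angular integral of $|x-z|^{-d}\,V(\delta_{\mathcal{U}^{c}}(z))/V(\delta_{\mathcal{I}}(z))$ over the gap. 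That integral is not covered by Lemma~\ref{Lemma8} and needs the separate Lemma~\ref{Lem5.7}, together with truncated Green function bounds for $\mathcal{U}^{c}$ and a further split at $|x-z|=T(t)$ (the pieces $\mathbf{II}$ and $\mathbf{III}$). None of these ingredients appear in your outline, and without them the step as you describe it fails for general $\overline{\alpha}<2$.

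A secondary inaccuracy concerns where the factor $1\wedge \frac{T(t)^{d-1}}{\delta_{\mathcal{I}}^{d-1}(x)}\frac{V^{2}(T(t))}{V^{2}(\delta_{\mathcal{I}}(x))}$ comes from. It is not obtained by ``spending one power of $T(t)/\delta_{\mathcal{I}}(x)$ on the gap width'': in the paper the near-gap term $A_{t}(x)$ keeps the full $d$-power decay $|x|^{-d}V^{2}(T(t))/V^{2}(|x|)$, coming from the free bound $p_{t-l}(x-z)\lesssim t\,|x-z|^{-d}V^{-2}(|x-z|)$ together with the geometric inequality $|x-z|\geq(1-2\varepsilon^{2})|x|$, while the weaker $(d-1)$-power appears only in the far-jump term $B_{t}(x)$, where $|x|<2s=2T(t)/\sqrt{\varepsilon}$ is used to convert $(\sqrt{\varepsilon})^{d}T(t)^{-d}$ into $\sqrt{\varepsilon}\,T(t)^{-1}|x|^{1-d}V^{2}(T(t))/V^{2}(|x|)$. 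If you rebuild the long-time estimate along the paper's lines, this bookkeeping is what assembles the stated right-hand side with the $\vee\sqrt{\varepsilon}$ structure.
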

\noindent We postpone the proof of this proposition until Section~\ref{Sec5}. 

\noindent Using (\ref{HuntPres}) and Proposition~\ref{MainProp} we get
\begin{eqnarray}
 \int_{D_{2}} \left| r_{D} (t, x, x) - r_{H^{*}} (t, x, x) \right| dx & \leq & \int_{D_{2}} \left( r_{\mathcal{I}} (t, x, x) - r_{\mathcal{U}^{c}} (t, x, x) \right) dx \\
& \leq &\frac{ C\left(\varepsilon \right)}{T(t)^{d}} \int_{D_{2}} \left(1 \wedge  \frac{T(t)^{d-1}}{\delta_{\mathcal{I}}^{d-1} (x)}  \frac{V^{2}(T(t))}{V^2\left( \delta_{\mathcal{I}} (x) \right)} \right) dx.
\end{eqnarray}

\noindent Notice that since $x \in \Gamma_{2s}(v(q),\varepsilon)$, $\partial D \cap B(q,r) \subset B(q,r) \backslash \mathcal{I}$, and  $\varepsilon < 1/4$ we have
\begin{eqnarray}
\delta_{\mathcal{I}}(x) & \geq & | x - q | \sin \left( 2\varphi_{\varepsilon} - \pi / 2 \right) = - | x - q | \cos \left( 2\varphi_{\varepsilon} \right) \nonumber \\
& = & \left(1 - 2\varepsilon^{2} \right) |x - q| \geq \left(1 - 2\varepsilon^{2} \right) \delta_{D}(x) > \frac{7}{8} \delta_{D}(x).
\end{eqnarray}

\noindent Hence
\begin{equation}
\int_{D_{2}} \left| r_{D} (t, x, x) - r_{H^{*}} (t, x, x) \right| dx \leq \frac{C(\varepsilon)}{T(t)^{d}} \int_{D_{2}} \left(1 \wedge  \frac{T(t)^{d-1}}{\delta_{D}^{d-1} (x)}  \frac{V^{2}(T(t))}{V^2\left( \delta_{D} (x) \right)} \right) dx.
\end{equation}

\noindent We can use our Potter-like bounds from Lemma~\ref{Lemma6} again: if $T(t) < 1/\theta$, then
\begin{eqnarray}
\int_{D_{2}} \left| r_{D} (t, x, x) - r_{H^{*}} (t, x, x) \right| dx & \leq & \frac{C(\varepsilon)}{T(t)^{d}} \int_{D_{2}} \left\{1 \wedge  \frac{T(t)^{d-1}}{\delta_{D}^{d-1} (x)}  \left( \frac{T(t)^{\underline{\alpha}}}{\delta_{D}^{\underline{\alpha}} (x)} \vee \frac{T(t)^{\overline{\alpha}}}{\delta_{D}^{\overline{\alpha}} (x)} \right) \right\} dx  \\
& \leq & \frac{C(\varepsilon)}{T(t)^{d}} \int_{D_{2}} \left\{ 1 \wedge \left( \frac{T(t)}{\delta_{D} (x)} \right)^{d + \underline{\alpha} - 1} + 1 \wedge  \left( \frac{T(t)}{\delta_{D} (x)} \right)^{d + \overline{\alpha} - 1} \right\} dx. 
\end{eqnarray}

\noindent Letting $\eta = T(t)$ as above, we can apply Proposition~\ref{Prop1} to get, for small enough $t$, that
\begin{eqnarray}
 \int_{D_{2}} \left| r_{D} (t, x, x) - r_{H^{*}} (t, x, x) \right| dx & \leq & \frac{C(\varepsilon)}{T(t)^{d-1}} \mathcal{H}^{d-1} \left( \partial D \right) \int_{0}^{\infty} \left\{ \left( 1 \wedge r^{-d - \underline{\alpha} + 1} \right) + \left( 1 \wedge r^{-d - \overline{\alpha} +1 } \right) \right\} dr \nonumber \\
 & \leq & \boxed{C(\varepsilon) T(t)^{1-d} }.
\end{eqnarray}

Finally, it remains to show that 
\begin{eqnarray}
\left| \int_{D_{2}} r_{H^{*}(x)} (t, x, x) \hspace{0.02in} dx - \mathcal{H}^{d-1}(\partial D) T(t)^{1-d}  \int_{0}^{\infty} r_{\mathbb{H}} \left( t, \left(q, 0,..., 0 \right), \left(q, 0, ..., 0 \right) \right) dq \right| \leq c(\varepsilon) T(t).
\end{eqnarray}
To do this we apply Proposition \ref{Prop1} to $\int_{D_{2}} r_{H^{*}(x)} (t, x, x) dx$. 
Note that, by construction, we have
\begin{eqnarray}
r_{H^{*}(x)} (t, x, x) & = & r_{H^{*}(x)} \left(t, \left( \delta_{H^{*}(x)} (x), 0, ..., 0 \right), \left( \delta_{H^{*}(x)} (x), 0, ..., 0 \right) \right) \\
& = & r_{H^{*}(x)} \left(t, \left( \delta_{D} (x), 0, ..., 0 \right), \left( \delta_{D} (x), 0, ..., 0 \right) \right) \\
& = & r_{\mathbb{H}} \left(t, \left( \delta_{D} (x), 0, ..., 0 \right), \left( \delta_{D} (x), 0, ..., 0 \right) \right) \\
& =: & r_{\mathbb{H}} \left(t, \delta_{D} (x) \right).
\end{eqnarray}
Thus we can change from $D_{2}$ to $D$ by remarking that 
\begin{equation}
 \int_{D_{2}} r_{H^{*}(x)} (t, x, x) \hspace{0.02in} dx = \int_{D} r_{\mathbb{H}} \left(t, \delta_{D} (x) \right) dx - \int_{D_{1} \cup D_{3}} r_{\mathbb{H}} \left(t, \delta_{D} (x) \right) dx
\end{equation}
and that by the same arguments as (\ref{EqA}) and (\ref{EqB}) we also have that
\begin{equation}
\int_{D_{1} \cup D_{3}} r_{\mathbb{H}} \left(t, \delta_{D} (x) \right) dx \leq c(\varepsilon) T(t)^{1-d},
\end{equation}
where $c(\varepsilon) \rightarrow 0$, as $\varepsilon \rightarrow 0$. Lemma~\ref{Lemma3} tells us
\begin{equation}
r_{\mathbb{H}} (t, \delta_{D} (x)) \leq C T(t)^{-d} \left( 1 \wedge \frac{T(t)^{d}}{\delta_{D}^{d}} \frac{V^{2}(T(t))}{V^{2} \left( \delta_{D} (x) \right)} \right).
\end{equation}
Applying our Potter-like bounds from Lemma~\ref{Lemma6} gives us
\begin{equation}
r_{\mathbb{H}} (t, \delta_{D} (x)) \leq \frac{C}{T(t)^{d}} \left\{ 1 \wedge \left( \frac{T(t)}{\delta_{D} (x)} \right)^{d + \underline{\alpha}} + 1 \wedge  \left( \frac{T(t)}{\delta_{D} (x)} \right)^{d + \overline{\alpha}} \right\}.
\end{equation}

We wish to show that $r_{\mathbb{H}} \left(t, \delta_{D} (x) \right)$ satisfies the assumptions of Proposition~\ref{Prop1}. Hence we must show that $r_{\mathbb{H}} \left(t, \delta_{D} (x) \right)$ is Lipschitz. Firstly, the following bound is provided by \cite{Grz}:
\begin{lemma}\label{Lemma1}
Let $\psi \in WLSC(\underline{\alpha}, \theta)$. Then for $T(t) < 1/\theta$ we have
\begin{eqnarray}
\left| \nabla_{x} p_{t} (x) \right| \leq \frac{c}{T(t)} \min \left\{ p_t(0), \frac{t}{ |x|^d V^{2}(|x|)} \right\}.
\end{eqnarray}
\end{lemma}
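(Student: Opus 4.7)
The plan is to begin with the Fourier representation
\[
p_t(x) = (2\pi)^{-d}\int_{\mathbb{R}^d} e^{-i\langle x,\xi\rangle - t\psi(\xi)}\,d\xi.
\]
The Hartman--Wintner condition (\ref{Hart}), which is a consequence of $WLSC(\underline\alpha,\theta)$, makes $e^{-t\psi(\xi)}$ integrable against any polynomial weight, so one may differentiate under the integral sign and deduce the pointwise estimate
\[
|\nabla_x p_t(x)| \le (2\pi)^{-d}\int_{\mathbb{R}^d} |\xi|\, e^{-t\psi(\xi)}\,d\xi.
\]

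For the first half of the minimum, $|\nabla p_t(x)| \le c\,p_t(0)/T(t)$, I would split the $\xi$-integration at $|\xi|=1/T(t)$. On $\{|\xi|\le 1/T(t)\}$ the crude bound $|\xi|\le 1/T(t)$ gives exactly $p_t(0)/T(t)$. On $\{|\xi|>1/T(t)\}$ the hypothesis $T(t)<1/\theta$ puts us in the scaling regime, and combining $WLSC(\underline\alpha,\theta)$ with the identification $\psi(1/T(t))\simeq 1/V^2(T(t))=1/t$ from the remark relating $V$ and $\psi$ gives $t\psi(\xi)\ge c\,(|\xi|T(t))^{\underline\alpha}$. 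A polar change of variables $|\xi|=\lambda/T(t)$ then converts the tail to $C\,T(t)^{-d-1}\int_1^\infty \lambda^d e^{-c\lambda^{\underline\alpha}}\,d\lambda$, which is finite, and the parallel estimate $p_t(0)\simeq T(t)^{-d}$ (obtained by the same splitting without the $|\xi|$ factor) closes this half.

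For the off-diagonal half $|\nabla p_t(x)| \le c\,t/(T(t)\,|x|^d V^2(|x|))$, the plan is to exploit the semigroup decomposition $p_t = p_{t/2}*p_{t/2}$ so that $\nabla p_t(x) = \int_{\mathbb{R}^d} \nabla p_{t/2}(x-y)\,p_{t/2}(y)\,dy$. Splitting the domain into $\{|y|\le|x|/2\}$ and $\{|y|>|x|/2\}$ ensures that on each piece one of $y$ and $x-y$ has norm at least $|x|/2$: on the factor with the small argument I would apply the first bound just established (which supplies the $1/T(t)$), and on the factor with the large argument I would use the known off-diagonal heat-kernel estimate $p_{t/2}(z)\le c\,t/(|z|^d V^2(|z|))$ available for unimodal $\psi\in WLSC$. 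The Potter-like bound of Lemma~\ref{Lemma6} then lets one replace $V^2(\cdot)$ evaluated at any point of norm comparable to $|x|$ by $V^2(|x|)$, after which the remaining convolution with $p_{t/2}$ contributes at most a constant.

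The main obstacle is matching the two regimes at $|x|\sim T(t)$: one must verify that the two halves of the $\min$ agree there, which boils down to $p_t(0)\simeq t/(T(t)^d V^2(T(t)))$ via $V^2(T(t))=t$, and the $WLSC/WUSC$ scaling constants from Lemma~\ref{Lemma6} must be tracked uniformly in $t$. This is exactly where the uniform hypothesis $T(t)<1/\theta$ really enters.
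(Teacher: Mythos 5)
First, a point of comparison: the paper does not prove this lemma at all --- it is quoted from \cite{Grz} (Grzywny--Szczypkowski), so you are attempting more than the paper does, and your argument has to stand on its own. Your on-diagonal half is essentially sound: differentiating the Fourier representation, splitting at $|\xi|=1/T(t)$, and using $t\psi(\xi)\geq c(|\xi|T(t))^{\underline{\alpha}}$ via $V^{2}(r)\simeq 1/\psi(1/r)$ does give $\|\nabla p_{t}\|_{\infty}\leq C\,T(t)^{-d-1}$. The only caveat is your closing step: the ``same splitting'' only yields the \emph{upper} bound $p_{t}(0)\leq CT(t)^{-d}$, whereas to rewrite $T(t)^{-d-1}$ as $c\,p_{t}(0)/T(t)$ you need the \emph{lower} bound $p_{t}(0)\geq cT(t)^{-d}$, which comes from unimodality plus a Pruitt-type estimate (it is in \cite{Bog1,Bog2}), not from the Fourier integral.

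The genuine gap is in the off-diagonal half. In the representation $\nabla p_{t}(x)=\int \nabla p_{t/2}(x-y)\,p_{t/2}(y)\,dy$ the gradient sits on one fixed factor; you cannot reassign it region by region, since swapping factors is a global identity (and a change of variables merely relabels the same difficulty). Concretely, on the region $\{|y|\leq |x|/2\}$ the large argument $|x-y|\geq|x|/2$ is carried by the gradient factor, and there your only available estimate is the on-diagonal bound $|\nabla p_{t/2}|\leq CT(t)^{-d-1}$, which after integrating $p_{t/2}(y)$ gives $CT(t)^{-d-1}$ --- larger than the target $\tfrac{c\,t}{T(t)|x|^{d}V^{2}(|x|)}$ by a factor of order $(|x|/T(t))^{d}\,V^{2}(|x|)/t$ in the relevant regime $|x|\gg T(t)$. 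Any bound that does decay in $|x-y|$ for the gradient factor is precisely the off-diagonal gradient estimate being proved, so the argument is circular exactly where the lemma has content. Even the natural repairs do not close it: unimodality gives $\|\nabla p_{t/2}\|_{L^{1}}\leq c/T(t)$ (by radial monotonicity and integration by parts --- an estimate you use implicitly but never establish), which handles the favorable region where the gradient factor has the \emph{small} argument, but on the unfavorable region one gets at best $\int_{|z|>|x|/2}|\nabla p_{t/2}(z)|\,dz\leq c\,t/(|x|V^{2}(|x|))$, and pairing this with $\|p_{t/2}\|_{\infty}\leq CT(t)^{-d}$ still overshoots the target by $(|x|/T(t))^{d-1}$, which is fatal for $d\geq 2$ (the paper's setting). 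Controlling $|\nabla p_{t}(x)|$ pointwise for $|x|\gg T(t)$ requires finer input than the size of $p_{t}$ and a convolution split --- this is why the paper imports the estimate from \cite{Grz} rather than deriving it.
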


\noindent Next
\begin{lemma}\label{Lemma2}
Let $D \subset \mathbb{R}^d$ be an open nonempty set. Fix $\varepsilon > 0$. For any $y \in D$ and $w$, $z \in D$ with $\delta_{D}(w) > \varepsilon$, $\delta_{D} (z) > \varepsilon$, there exists $c(\varepsilon, t)$ such that
\begin{equation}
\left| r_{D} (t, w, y) - r_{D} (t, z, y) \right| \leq c(\varepsilon, t) \left| w - z \right|.
\end{equation}
\end{lemma}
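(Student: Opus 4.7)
The approach I would take is to apply the Hunt formula,
$r_D(t, x, y) = p_t(y - x) - p_D(t, x, y)$,
which reduces the desired estimate to Lipschitz continuity (in the first spatial slot) of both $p_t(y - \cdot)$ and $p_D(t, \cdot, y)$ on $\{x \in D : \delta_D(x) > \varepsilon\}$. The first of these is immediate from Lemma~\ref{Lemma1}, which gives $|p_t(y-w) - p_t(y-z)| \leq (c\,p_t(0)/T(t))\,|w-z|$, so all of the work is concentrated on producing a Lipschitz estimate for $p_D(t, \cdot, y)$ with constant permitted to depend on both $\varepsilon$ and $t$.

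For $p_D(t, \cdot, y)$, I would use Chapman--Kolmogorov at some auxiliary time $s \in (0, t)$, followed by the Hunt formula applied to the first factor, to write
$p_D(t, w, y) - p_D(t, z, y) = \int_D [p_s(u - w) - p_s(u - z)]\,p_D(t-s, u, y)\,du - \int_D [r_D(s, w, u) - r_D(s, z, u)]\,p_D(t-s, u, y)\,du$.
The gradient bound in Lemma~\ref{Lemma1}, combined with the semigroup bound $\int_D p_s(u-w)\,p_D(t-s, u, y)\,du \leq p_t(y-w) \leq p_t(0)$, controls the first integral by $c\,p_t(0)\,|w-z|/T(s)$, a genuine Lipschitz bound once $s$ is fixed.

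The main obstacle is the second integral, which couples the Lipschitz norm of $r_D$ at time $t$ back to that at the smaller time $s$. To close this self-reference I would rewrite $r_D(r, \cdot, u)$ through the Ikeda--Watanabe formula, producing a Volterra-type integral equation for the increment $G(r, u) := r_D(r, w, u) - r_D(r, z, u)$ whose inhomogeneous term is of order $|w-z|$ by the argument above applied pointwise in $(r, u)$. Picard iteration --- using the sub-stochasticity $\int_D p_D(t-s, u, y)\,du \leq 1$ and Lemma~\ref{Lemma3} to bound the iteration kernel uniformly once $\delta_D(w), \delta_D(z) > \varepsilon$ --- then solves the equation and produces $|G(t, y)| \leq c(\varepsilon, t)\,|w - z|$, with a constant that is allowed to deteriorate as $\varepsilon \to 0$ or $t \to 0$, as permitted by the statement.
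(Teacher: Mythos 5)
Your reduction via the Hunt formula points the argument in a harder direction: it trades the quantity you need for Lipschitz continuity of the killed kernel $p_D(t,\cdot,y)$, and the Chapman--Kolmogorov step does not resolve this, because the second integral in your decomposition contains exactly the increment $r_{D}(s,w,u)-r_{D}(s,z,u)$ that the lemma asserts to be small; the argument is circular until the proposed Volterra closure is actually carried out, and that closure is where the gap lies. Writing it out, the Ikeda--Watanabe and Hunt formulas give $G(s,u)=F(s,u)-\int_0^s\int_D G(l,v)\,\kappa(s-l,v,u)\,dv\,dl$ with kernel $\kappa(m,v,u)=\int_{D^c}\nu(v-\zeta)\,p_m(\zeta-u)\,d\zeta$. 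This kernel involves only the integration variables $v\in D$, $\zeta\in D^{c}$ and $u$; it does not see $w$ or $z$ at all, so the hypothesis $\delta_D(w),\delta_D(z)>\varepsilon$ cannot make it bounded --- for fixed $m>0$ it tends to infinity as $v\to\partial D$, since $\nu$ is an infinite measure --- and Lemma~\ref{Lemma3}, which bounds $r_D$ itself, is not the tool that controls it. The inhomogeneous term is also not ``of order $|w-z|$ by the argument above applied pointwise'': it requires integrating the gradient bound of Lemma~\ref{Lemma1} over $l\in(0,s)$, and on the set where $|v-(\lambda w+(1-\lambda)z)|\lesssim T(l)$ that bound is of size $T(l)^{-d-1}$ on a region of volume $\simeq T(l)^{d}$, so one faces $\int_0^s T(l)^{-1}\,dl$, which can diverge (it does already in the $\alpha$-stable case with $\alpha\le 1$, where $T(l)=l^{1/\alpha}$). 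Finally, convergence of the Picard series would require smallness of the iterated kernels or a Gronwall-type structure, neither of which is exhibited; nothing in the sketch rules out divergence.

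The missing idea is that no self-referential scheme is needed. By the definition (\ref{Rem}) and the symmetry $r_D(t,w,y)=r_D(t,y,w)$ you may write $r_D(t,w,y)=\mathbb{E}^{y}\left[\tau_D<t;\ p_{t-\tau_D}\left(X\left(\tau_D\right)-w\right)\right]$, so the dependence on the perturbed variable sits inside the \emph{free} kernel evaluated at $X(\tau_D)\notin D$, a point at distance greater than $\varepsilon$ from both $w$ and $z$. One application of the mean value theorem under the expectation, using the second term of the minimum in Lemma~\ref{Lemma1}, then gives $\left|r_D(t,w,y)-r_D(t,z,y)\right|\le c(\varepsilon,t)\left|w-z\right|$ directly (and for $|w-z|\gtrsim\varepsilon$ the bound is trivial since $r_D\le p_t(0)$). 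This is the paper's proof: a one-step estimate that avoids the Hunt-formula detour, the Chapman--Kolmogorov decomposition, and the iteration entirely.
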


\begin{proof}
The mean value theorem and Lemma~\ref{Lemma1} tells us that there exists some $0 \leq l \leq 1$ such that 
\begin{eqnarray}
\left| p_t(w) - p_t(z) \right| & \leq & \left| \nabla_x p_t ( lw + (1-l)w ) \right| |w - z| \\
& \leq & \frac{c}{T(t)} \min \left\{ p_t(0), \frac{t}{ | lw + (1 - l)z|^{d} V^{2} \left( | lw + (1 - l)z | \right)} \right\} |w - z| \\
& \leq & \frac{c}{T(t)} \min \left\{ p_t(0), \frac{t}{  ( |w| \wedge |z| )^{d} V^{2} \left( |w| \wedge |z| \right)} \right\} |w - z|.
\end{eqnarray}

\noindent By definition of the heat remainder, (\ref{Rem}), we have
\begin{equation}
r_{D} (t, x, y) = \mathbb{E}^{y} \left[ \tau_{D} < t ; \ p_{t - \tau_{D}} \left( X \left( \tau_D \right) - x \right) \right]. \nonumber
\end{equation}

\noindent Thus
\begin{eqnarray}
\left| r_{D} (t, w, y) - r_{D} (t, z, y) \right| & \leq & \mathbb{E}^{y} \left[ \tau_{D} < t ; \ p_{t - \tau_{D}} \left( X \left( \tau_D \right) - w \right) - p_{t - \tau_{D}} \left( X \left( \tau_D \right) - z \right) \right] \\
& \leq & c \mathbb{E}^{y} \bigg[ \tau_{D} < t ; \ \frac{1}{T(t - \tau_{D})} \min \bigg\{ p_{t - \tau_{D}}(0), \\
&& \hspace{0.3in} \frac{t - \tau_{D}}{  \left( |X \left( \tau_D \right) - w| \wedge |X \left( \tau_D \right) - z| \right)^{d} V^{2} \left( | X \left( \tau_D \right) - w| \wedge |X \left( \tau_D \right) - z | \right)} \bigg\} |w - z| \bigg] \nonumber \\
& \leq & c \mathbb{E}^{y} \bigg[ \tau_{D} < t ; \ \frac{1}{T(t - \tau_{D})} \min \bigg\{ p_{t - \tau_{D}}(0), \\
&& \hspace{0.3in} \frac{t - \tau_{D}}{  \left( |\delta_{D}(w)| \wedge |\delta_{D}(z)| \right)^{d} V^{2} \left( | \delta_{D}(w)| \wedge | \delta_{D}(z) | \right)} \bigg\} |w - z| \bigg] \nonumber \\
& \leq & c \frac{ | w - z | }{\left( |\delta_{D}(w)| \wedge |\delta_{D}(z)| \right)^{d} V^{2} \left( | \delta_{D}(w)| \wedge | \delta_{D}(z) | \right)} \mathbb{E}^{y} \left[ \tau_{D} < t ; \ \frac{t - \tau_{D} }{T(t - \tau_{D})} \right]  \\
& \leq & c(\varepsilon, t) | w - z |,
\end{eqnarray}
where, in the last inequality, we have used our assumption that both $\delta_{D}(w)$ and $\delta_{D} (z)$ are larger than $\varepsilon$.
\end{proof}

Finally we can now show that $r_{\mathbb{H}} \left(t, \delta_{D} (x) \right)$ is Lipschitz:
\begin{lemma} \label{Lips}
Let $D \subset \mathbb{R}^d$ be an open nonempty set. Fix $\varepsilon > 0$. For any $y \in D$ and $w$, $z \in D$ with $\delta_{D}(w) > \varepsilon$, $\delta_{D} (z) > \varepsilon$, there exists $c(\varepsilon, t)$ such that
\begin{equation}
\left| r_{D} (t, w, w) - r_{D} (t, z, z) \right| \leq c(\varepsilon, t) \left| w - z \right|.
\end{equation}
\end{lemma}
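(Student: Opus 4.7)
The plan is to reduce Lemma~\ref{Lips} to Lemma~\ref{Lemma2} via the triangle inequality together with the symmetry of the heat remainder for isotropic processes. Inserting the intermediate term $r_{D}(t, z, w)$ I would split
\begin{equation*}
\left| r_{D}(t, w, w) - r_{D}(t, z, z) \right| \leq \left| r_{D}(t, w, w) - r_{D}(t, z, w) \right| + \left| r_{D}(t, z, w) - r_{D}(t, z, z) \right|.
\end{equation*}

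For the first summand on the right I would invoke Lemma~\ref{Lemma2} with $y := w$; the hypotheses $\delta_{D}(w) > \varepsilon$ and $\delta_{D}(z) > \varepsilon$ are precisely what Lemma~\ref{Lemma2} requires, so this term is bounded by $c(\varepsilon, t)\, |w - z|$.

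For the second summand I would first exploit the isotropy of $X$, which gives $p_{t}(x) = p_{t}(-x)$; combined with the Hunt formula $p_{D} = p_{t} - r_{D}$ and the symmetry $p_{D}(t, x, y) = p_{D}(t, y, x)$ of the Dirichlet heat kernel of a symmetric Markov process, this yields the symmetry $r_{D}(t, x, y) = r_{D}(t, y, x)$ of the heat remainder itself. (This symmetry is already used implicitly in the proof of Lemma~\ref{Lemma2}, where the expectation is rewritten from $\mathbb{E}^{x}$ to $\mathbb{E}^{y}$.) Hence $r_{D}(t, z, w) = r_{D}(t, w, z)$, and a second application of Lemma~\ref{Lemma2}, this time with $y := z$, bounds the second summand by $c(\varepsilon, t)\, |w - z|$ as well. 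Combining the two estimates and absorbing the factor of $2$ into $c(\varepsilon, t)$ finishes the proof.

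There is really no substantive analytic obstacle here: the gradient bound from Lemma~\ref{Lemma1} and the reduction from $|X(\tau_{D}) - w| \wedge |X(\tau_{D}) - z|$ to $\delta_{D}(w) \wedge \delta_{D}(z)$ have already been carried out once, inside Lemma~\ref{Lemma2}. The only subtlety worth flagging is that Lemma~\ref{Lemma2} varies only one of the two positional slots of $r_{D}$, so one of the two differences must be rewritten via $r_{D}(t, x, y) = r_{D}(t, y, x)$ before Lemma~\ref{Lemma2} can be applied verbatim.
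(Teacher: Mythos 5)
Your proposal is correct and follows essentially the same route as the paper: the paper's proof is exactly the triangle inequality through $r_{D}(t,z,w)$, two applications of Lemma~\ref{Lemma2}, and the symmetry $r_{D}(t,w,z) = r_{D}(t,z,w)$ of the heat remainder. Your additional remarks justifying that symmetry only make explicit what the paper takes for granted.
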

\begin{proof}
By Lemma~\ref{Lemma2} and the symmetry of the heat remainder, that is $r_{D} (t, w, z) = r_{D} (t, z, w)$, we get
\begin{eqnarray}
\left| r_{D}(t,w,w) - r_{D} (t,z,z) \right| & \leq & \left| r_{D}(t,w,w) - r_{D}(t,z,w) \right| + \left| r_{D} (t,w,z) - r_{D} (t,z,z) \right| \\
& \leq & c(\varepsilon, t) \left| w - z \right|.
\end{eqnarray}
\end{proof}

Lemma~\ref{Lips} tells us that $r_{\mathbb{H}} \left(t, \delta_{D}(x) \right)$ is Lipschitz. Thus $r_{\mathbb{H}} \left(t, \delta_{D} (x) \right)$ satisfies the assumptions of Proposition~\ref{Prop1}. Hence, for small $t$, we have
\begin{eqnarray}
\left| \int_{D} r_{\mathbb{H}} \left(t, \delta_{D} (x) \right) dx - C_{\mathbb{H}}(t) \mathcal{H}^{d-1}(\partial D) \right| \leq \boxed{\varepsilon T(t)^{1-d}}.
\end{eqnarray}

This completes the proof of Theorem~\ref{Main}.

\*

\section{Proof of Proposition~\ref{MainProp}} \label{Sec5}

\begin{proof}[{\bf Proof of Proposition~\ref{MainProp}}] We wish to show that
\begin{equation}
0 \leq r_{\mathcal{I}} (t, x, x) - r_{\mathcal{U}^{c}} (t, x, x) \leq \frac{\left(\varepsilon^{1 - \underline{\alpha}/2} + \varepsilon^{1 - \overline{\alpha}/2} \right) \vee \sqrt{\varepsilon}}{T(t)^{d}} \left( 1 \wedge \frac{T(t)^{d-1}}{\delta_{\mathcal{I}}^{d-1} (x)}  \frac{V^{2}(T(t))}{V^2\left( \delta_{\mathcal{I}} (x) \right)} \right).
\end{equation}
In order to show this inequality we combine different aspects of similar proofs given in Proposition~3.2 of \cite{Bog5} and Proposition~3.1 of \cite{Ban2}.

Firstly, by definition, we have
\begin{eqnarray}
r_{\mathcal{I}} (t, x, x) - r_{\mathcal{U}^{c}} (t, x, x) & = & p_{\mathcal{U}^{c}} (t, x, x) - p_{\mathcal{I}} (t, x, x) \\
& = & \mathbb{E}^{x} \left[ \tau_{\mathcal{I}} < t, \ X\left( \tau_{\mathcal{I}} \right) \in \mathcal{U}^{c} \hspace{0.02in} \backslash \hspace{0.02in} \mathcal{I} ; \ p_{\mathcal{U}^{c}} \left({t - \tau_{\mathcal{I}}}, X \left( \tau_{\mathcal{I}} \right), x \right) \right].
\end{eqnarray}

\noindent The space-time Ikeda-Watanabe formula from Corollary~2.8 in \cite{Kul2} then tells us that
\begin{eqnarray}
r_{\mathcal{I}} (t, x, x) - r_{\mathcal{U}^{c}} (t, x, x) & = & \int_{\mathcal{I}} \int_{0}^t p_{\mathcal{I}} (l, x, y) \int_{\mathcal{U}^{c} \backslash \mathcal{I}} \nu(y - z) \hspace{0.02in} p_{\mathcal{U}^{c}} (t-l, x, z) \hspace{0.02in} dz \hspace{0.02in} dl \hspace{0.02in} dy. \label{IWEq}
\end{eqnarray}

\noindent Without loss of generality we may assume that $q = 0$ and $v(0) = (1,0, \hdots, 0)$. Let
\begin{eqnarray}
I & = & \left\{ y \ : \ y \cdot v(0) > \varepsilon |y| \right\}, \\
U & = & \left\{ y \ : \ y \cdot v(0) < -\varepsilon |y| \right\}, \\
\Gamma(0,\varepsilon) & = & \left\{ y \ : \ y \cdot v(0) > \sqrt{1 - \varepsilon^2} |y| \right\}.
\end{eqnarray}

\begin{center}
\begin{tikzpicture}
\fill[gray!20] (-1,-1) -- (5,-1) -- (5,5) -- (-1,5);

\fill[white] (2,2) -- (3.984313,2.25) arc (7.125:172.875:2) -- cycle;
\fill[gray!40] (2,2) -- (0.0156865,1.75) arc (187.125:352.875:2) -- cycle;

\draw (2,2) node[circle, fill, inner sep=1pt] {};
\draw (2,2) node[above right] {$0$};
\draw[dotted] ({2 - sqrt(4 - (1.75 - 2)^2)},1.75) -- ({2 + sqrt(4 - (2.25 - 2)^2)},2.25);
\draw[dotted] ({2 - sqrt(4 - (1.75 - 2)^2)},2.25) -- ({2 + sqrt(4 - (2.25 - 2)^2)},1.75);

\draw (2,2) -- ({2 + sqrt(4 - (2.25 - 2)^2) - 1.3}, 2.05);
\draw ({2 + sqrt(4 - (2.25 - 2)^2) - 1.3},2.05) -- ({2 + sqrt(4 - (2.25 - 2)^2) - 1},1.92);
\draw ({2 + sqrt(4 - (2.25 - 2)^2) - 1},1.92) -- ({2 + sqrt(4 - (2.25 - 2)^2) - 0.6},2.1);
\draw ({2 + sqrt(4 - (2.25 - 2)^2) - 0.6},2.1) -- ({2 + sqrt(4 - (2.25 - 2)^2) - 0.4},1.91);
\draw ({2 + sqrt(4 - (2.25 - 2)^2) - 0.4},1.91) -- ({2 + sqrt(4 - (2.25 - 2)^2) - 0.1},2.02);
\draw ({2 + sqrt(4 - (2.25 - 2)^2) - 0.1},2.02) -- ({2 + sqrt(4 - (2.25 - 2)^2) + 0.4},1.8);
\draw ({2 + sqrt(4 - (2.25 - 2)^2) + 0.4},1.8) -- (5,1.8);

\draw (2,2) -- ({2 - sqrt(4 - (2.25 - 2)^2) + 1.3}, 1.98);
\draw ({2 - sqrt(4 - (2.25 - 2)^2) + 1.3}, 1.98) -- ({2 - sqrt(4 - (2.25 - 2)^2) + 1},2.05);
\draw ({2 - sqrt(4 - (2.25 - 2)^2) + 1},2.05) -- ({2 - sqrt(4 - (2.25 - 2)^2) + 0.6},2.1);
\draw ({2 - sqrt(4 - (2.25 - 2)^2) + 0.6},2.1) -- ({2 - sqrt(4 - (2.25 - 2)^2) + 0.4},1.91);
\draw ({2 - sqrt(4 - (2.25 - 2)^2) + 0.4},1.91) -- ({2 - sqrt(4 - (2.25 - 2)^2) + 0.1},2.07);
\draw ({2 - sqrt(4 - (2.25 - 2)^2) + 0.1},2.07) -- ({2 - sqrt(4 - (2.25 - 2)^2) - 0.4},1.9);
\draw ({2 - sqrt(4 - (2.25 - 2)^2) - 0.4},1.9) -- (-1,2.3);

\draw (0,4) node {$\mathcal{U}^{c} \hspace{0.02in} \backslash \hspace{0.02in} \mathcal{I}$};

\draw[dashed] (3.984313,2.25) arc (7.125:172.875:2);
\draw[dashed] (0.0156865,1.75) arc (187.125:352.875:2);

\draw (2,1) node {$\mathcal{I}$};

\end{tikzpicture} \hspace{0.5in}
\begin{tikzpicture}
\fill[gray!10] (2,2) -- (5,{3/8 + 2}) -- (5,5) -- (-1,5) -- (-1,{3/8 + 2});
\fill[gray!20] (2,2) -- (5,{-3/8 + 2}) -- (5,-1) -- (-1,-1) -- (-1,{-3/8 + 2});

\draw[dashed]  (2,2) circle (2cm);
\draw (2,2) node[circle, fill, inner sep=1pt] {};
\draw (2,2) node[above right] {$0$};
\draw[dotted] ({2 - sqrt(4 - (1.75 - 2)^2)},1.75) -- ({2 + sqrt(4 - (2.25 - 2)^2)},2.25);
\draw[dotted] ({2 - sqrt(4 - (1.75 - 2)^2)},2.25) -- ({2 + sqrt(4 - (2.25 - 2)^2)},1.75);

\draw (2,2) -- ({2 + sqrt(4 - (2.25 - 2)^2) - 1.3}, 2.05);
\draw ({2 + sqrt(4 - (2.25 - 2)^2) - 1.3},2.05) -- ({2 + sqrt(4 - (2.25 - 2)^2) - 1},1.92);
\draw ({2 + sqrt(4 - (2.25 - 2)^2) - 1},1.92) -- ({2 + sqrt(4 - (2.25 - 2)^2) - 0.6},2.1);
\draw ({2 + sqrt(4 - (2.25 - 2)^2) - 0.6},2.1) -- ({2 + sqrt(4 - (2.25 - 2)^2) - 0.4},1.91);
\draw ({2 + sqrt(4 - (2.25 - 2)^2) - 0.4},1.91) -- ({2 + sqrt(4 - (2.25 - 2)^2) - 0.1},2.02);
\draw ({2 + sqrt(4 - (2.25 - 2)^2) - 0.1},2.02) -- ({2 + sqrt(4 - (2.25 - 2)^2) + 0.4},1.8);
\draw ({2 + sqrt(4 - (2.25 - 2)^2) + 0.4},1.8) -- (5,1.8);

\draw (2,2) -- ({2 - sqrt(4 - (2.25 - 2)^2) + 1.3}, 1.98);
\draw ({2 - sqrt(4 - (2.25 - 2)^2) + 1.3}, 1.98) -- ({2 - sqrt(4 - (2.25 - 2)^2) + 1},2.05);
\draw ({2 - sqrt(4 - (2.25 - 2)^2) + 1},2.05) -- ({2 - sqrt(4 - (2.25 - 2)^2) + 0.6},2.1);
\draw ({2 - sqrt(4 - (2.25 - 2)^2) + 0.6},2.1) -- ({2 - sqrt(4 - (2.25 - 2)^2) + 0.4},1.91);
\draw ({2 - sqrt(4 - (2.25 - 2)^2) + 0.4},1.91) -- ({2 - sqrt(4 - (2.25 - 2)^2) + 0.1},2.07);
\draw ({2 - sqrt(4 - (2.25 - 2)^2) + 0.1},2.07) -- ({2 - sqrt(4 - (2.25 - 2)^2) - 0.4},1.9);
\draw ({2 - sqrt(4 - (2.25 - 2)^2) - 0.4},1.9) -- (-1,2.3);

\draw (0,4) node {$U$};
\draw (0,0) node {$I$};

\fill[gray!40] ({2 - 3/8}, -1) -- (2,2) -- ({2 + 3/8},-1);
\draw[densely dotted] (2,2) -- ({2 - 3/8}, -1);
\draw[densely dotted] (2,2) -- ({2 + 3/8},-1);

\draw[dashed]  (2,2) circle (2cm);

\draw ({2.37},-0.5) node[right] {$\Gamma(0,\varepsilon)$};

\draw[->] (2,2) -- node[right]{$v (0)$} (2,1);

\draw (3.5,3.2) node[above right] {$B(0,r)$};

\end{tikzpicture} 
\end{center}

\noindent Notice that
\begin{eqnarray}
&&\mathcal{U}^{c} \hspace{0.02in} \backslash \hspace{0.02in} \mathcal{I} = B^c(0,r) \cup \left(U^c \hspace{0.02in} \backslash \hspace{0.02in} I \right) \hspace{0.3in} \text{ and } \hspace{0.3in} \mathcal{I} \subset I. \nonumber %Perhaps the second containment is unecessary
\end{eqnarray}

\noindent Hence (\ref{IWEq}) can be broken up as
\begin{eqnarray}
r_{\mathcal{I}} (t, x, x) - r_{\mathcal{U}^{c}} (t, x, x) & \leq & \int_{\mathcal{I}} \int_{0}^t p_{\mathcal{I}} (l, x, y) \int_{(U^c \backslash I) \cap B(0,r)} \nu(y - z) \hspace{0.02in} p_{\mathcal{U}^{c}} (t - l, x, z) \hspace{0.02in} dz \hspace{0.02in} dl \hspace{0.02in} dy \nonumber \\
&& + \int_{\mathcal{I}} \int_{0}^t p_{\mathcal{I}} (l, x, y) \int_{B^c (0,r)} \nu(y - z) \hspace{0.02in} p_{\mathcal{U}^{c}} (t - l, x, z) \hspace{0.02in} dz \hspace{0.02in} dl \hspace{0.02in} dy \\ 
& = & A_{t} (x) + B_{t} (x).
\end{eqnarray}

$\bm{A_{t} (x):}$ Lemma~1.5 in \cite{Bog2} gives a bound for the heat kernel under certain scaling conditions:
\begin{lemma} \label{Heat1}
Suppose $\psi \in WLSC(\underline{\alpha},\theta)$ and $T(t) < 1/\theta$. Then there exists a constant $C$ such that %Check conditions
\begin{equation}
p_t (x - z) \leq C \left(T^{-d}(t)  \wedge \frac{t}{|x - z|^d V^2 (|x - z|)} \right). \label{HeatKB}
\end{equation}
\end{lemma}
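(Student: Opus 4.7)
\textbf{Proof proposal for Lemma~\ref{Heat1}.} I would prove the two terms of the minimum separately and then take the minimum. First, I would establish the on-diagonal bound $p_t(x)\le p_t(0)\le C T(t)^{-d}$. Since the process is unimodal, $p_t$ is radially non-increasing, so $p_t(x)\le p_t(0)$ automatically. For $p_t(0)$ I would use Fourier inversion,
\[
p_t(0) = (2\pi)^{-d}\int_{\mathbb{R}^d} e^{-t\psi(\xi)}\,d\xi,
\]
and split the integral at the radius $|\xi|=1/T(t)$. The inner ball contributes at most $c T(t)^{-d}$ trivially. On the outer region I would invoke WLSC: since $T(t)<1/\theta$ we have $1/T(t)>\theta$, and for $|\xi|\ge 1/T(t)$, writing $\lambda=|\xi|T(t)\ge 1$,
\[
\psi(\xi)\ge \underline{C}\,(|\xi|T(t))^{\underline{\alpha}}\,\psi(1/T(t)).
\]
Combining this with $V^{2}(T(t))=t$ and the relation $V^{2}(r)\simeq 1/\psi(1/r)$ from the remark in Section~2 gives $\psi(1/T(t))\simeq 1/t$, hence $t\psi(\xi)\gtrsim (|\xi|T(t))^{\underline{\alpha}}$. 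The substitution $u=\xi T(t)$ then bounds the outer integral by $C T(t)^{-d}\int_{|u|\ge 1}e^{-c|u|^{\underline{\alpha}}}du\le C T(t)^{-d}$.

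Second, I would establish the off-diagonal bound $p_t(x)\le Ct/(|x|^{d}V^{2}(|x|))$. The key input is the pointwise estimate on the radial profile of a unimodal L\'evy measure,
\[
\nu(r)\le C\,\frac{\psi(1/r)}{r^{d}},
\]
which follows from the fact that, for an isotropic unimodal measure, $\psi(1/r)$ controls both the truncated second moment $\int_{|y|\le r}|y|^{2}\nu(dy)$ and the tail mass $\nu(\{|y|>r\})$, and these in turn dominate $r^{d}\nu(r)$ by unimodality. Using $V^{2}(r)\simeq 1/\psi(1/r)$ this rewrites as $\nu(r)\le C/(r^{d}V^{2}(r))$. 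Then I would use the classical semigroup bound
\[
p_{t}(x)\le c\,t\,\nu(|x|),
\]
valid away from the origin, which I would obtain by a Meyer-type decomposition: split $\nu=\nu_{1}+\nu_{2}$ at level $|x|/2$, write $X_{t}=Y_{t}+Z_{t}$ with $Y$ driven by the small-jump (truncated) part and $Z$ an independent compound Poisson with intensity $\nu_{2}$, and condition on the number of large jumps. The zero-large-jump contribution is negligible at distance $|x|$ (the small-jump part is concentrated near the origin on the scale $T(t)$), and the one-large-jump contribution yields a density bounded by $c t\nu(|x|)$. Combining the two displays completes the off-diagonal bound, and taking the minimum with the on-diagonal bound gives the statement.

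The main obstacle is the bound $p_{t}(x)\le c\, t\, \nu(|x|)$ for pure-jump unimodal processes under only WLSC: one must control the small-jump component uniformly in $t$ so that it contributes only subgaussian tails on the scale $T(t)$ and is therefore negligible compared to $t\nu(|x|)$ in the regime $|x|\gtrsim T(t)$. This requires the Hartman--Wintner condition (guaranteed by WLSC) and an exponential estimate for the truncated characteristic exponent, analogous to the Fourier split used in the on-diagonal step. A secondary subtlety is the cross-over region $|x|\asymp T(t)$, where the two bounds are comparable and one must check that the constants in each estimate match up consistently so the minimum is valid pointwise.
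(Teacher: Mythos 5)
The paper does not actually prove Lemma~\ref{Heat1}; it imports it verbatim as Lemma~1.5 of \cite{Bog2}, so your sketch has to stand on its own. Its first half does: the on-diagonal bound $p_t(x)\le p_t(0)\le CT(t)^{-d}$ via Fourier inversion, splitting at $|\xi|=1/T(t)$ and using WLSC plus $V^2(r)\simeq 1/\psi(1/r)$, is exactly the standard argument. The off-diagonal half, however, has a genuine gap: the intermediate claim $p_t(x)\le c\,t\,\nu(|x|)$ is false under the hypotheses of the lemma (and even under the paper's standing assumption of both local scalings). Take the truncated stable measure $\nu(x)=|x|^{-d-\alpha}\mathbf{1}_{\{|x|\le 1\}}$: it is unimodal with infinite mass, its exponent satisfies $WLSC$ and $WUSC$ locally at infinity, yet $\nu(r)=0$ for $r>1$ while $p_t(x)>0$ for $|x|>1$, the mass there being produced by several moderate jumps. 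This is precisely where your Meyer decomposition breaks down: with cutoff $|x|/2$, the ``zero-large-jump'' contribution is not negligible relative to $t\nu(|x|)$ --- in the example it is the whole density --- so no control of the truncated part can rescue the inequality. The two-sided comparison $p_t(x)\asymp p_t(0)\wedge t\nu(x)$ requires both scaling conditions and restricted ranges of $t$ and $|x|$, and is strictly stronger than what the lemma asserts.

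The repair is to aim directly at the weaker quantity the lemma actually contains: since $\nu(r)\le C h(r)/r^d$ where $h(r)=\int\bigl(1\wedge |y|^2/r^2\bigr)\nu(dy)$ and $h(r)\simeq\psi(1/r)\simeq 1/V^2(r)$, the right target is $p_t(x)\le C t\,h(|x|)/|x|^d$, and for this there is a short route that uses unimodality of $p_t$ itself rather than of $\nu$: radial monotonicity gives
\begin{equation}
\mathbb{P}\left(|X_t|>|x|/2\right)\ \ge\ \int_{\{|x|/2<|y|<|x|\}}p_t(y)\,dy\ \ge\ c_d\,|x|^d\,p_t(x),
\end{equation}
and the Pruitt-type maximal estimate $\mathbb{P}\left(\sup_{s\le t}|X_s|>r\right)\le C\,t\,h(r)$, combined with $h\simeq 1/V^2$ and the subadditivity $V(|x|)\le 2V(|x|/2)$, yields $p_t(x)\le C t/\bigl(|x|^d V^2(|x|)\bigr)$. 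Note that no scaling condition is needed for this half; WLSC enters only in your (correct) on-diagonal step. This is essentially the argument behind Lemma~1.5 in \cite{Bog2} (see also \cite{Bog1}), so I would replace your $t\nu(|x|)$ step by it.
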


\noindent Notice that if $x \in \Gamma(0,\varepsilon)$ and $z \in U^c \hspace{0.02in} \backslash \hspace{0.02in} I = \left\{ y : -\varepsilon | y | < y \cdot v(0) < \varepsilon | y | \right\}$, then
\begin{equation}
|x - z| \geq |x| \sin \left( 2 \varphi_{\varepsilon} - \frac{\pi}{2} \right) = |x| \left( 1 - 2\cos^{2} \left( \varphi_{\varepsilon} \right) \right) = |x| (1- 2\varepsilon^{2} ). \label{Comp}
\end{equation}

\noindent Lemma~\ref{Heat1} and the monotonicity of $V(r)$ thus imply that
\begin{eqnarray}
p_{t-l}(x - z) & \leq & C  \frac{1}{|x - z|^d} \frac{t}{V^2 (|x - z|)} \nonumber \\
& \leq & C \frac{1}{\left( 1 - 2 \varepsilon^2 \right)^{d} |x|^d} \frac{t}{V^{2} \left( (1 - 2\varepsilon^2) | x| \right)}.
\end{eqnarray}

\noindent By assumption $\psi \in WUSC \left( \overline{\alpha}, \theta \right)$ and $\varepsilon < 1/4$, hence:
\begin{eqnarray}
p_{t - l}(x - z) & \leq & C \left( 1- 2\varepsilon^2 \right)^{-d -\overline{\alpha}} \frac{1}{|x|^d}  \frac{t}{V^{2} \left( | x| \right)} \leq C \frac{1}{|x|^d}  \frac{t}{V^{2} \left( | x| \right)}. 
\end{eqnarray}

\noindent We can now apply this bound directly to $A_{t}(x)$:
\begin{eqnarray}
A_{t} (x) & \leq & \int_{\mathcal{I}} \int_{0}^t p_{\mathcal{I}} (l, x, y) \int_{(U^c \backslash I) \cap B(0,r)} \nu(y - z) \hspace{0.02in} p (t - l, x, z) \hspace{0.02in} dz \hspace{0.02in} dl \hspace{0.02in} dy \\
& \leq & \frac{C}{|x|^d} \frac{t}{V^2 (|x|)} \int_{\mathcal{I}} \int_{0}^t p_{\mathcal{I}} (l, x, y) \int_{(U^c \backslash I) \cap B(0,r)} \nu(y - z) \hspace{0.02in} dz \hspace{0.02in} dl \hspace{0.02in} dy \\
& \leq & \frac{C}{|x|^d} \frac{t}{V^2 (|x|)} \int_{\mathcal{I}} \int_{0}^{V^2 (1/\theta)} p_{\mathcal{I}} (l, x, y) \hspace{0.02in} dl \hspace{0.02in} \int_{(U^c \backslash I) \cap B(0,r)} \nu(y - z) \hspace{0.02in} dz \hspace{0.02in} dy \\
& = & \frac{C}{|x|^d} \frac{t}{V^2 (|x|)} \int_{(U^c \backslash I) \cap B(0,r)} \int_{\mathcal{I}} G^{V^2(1/\theta)}_{\mathcal{I}}(x,y) \hspace{0.02in} \nu(y - z) \hspace{0.02in} dy \hspace{0.02in} dz \label{SLast} \\
& = & \frac{C}{|x|^d} \frac{t}{V^2 (|x|)} \int_{(U^c \backslash I) \cap B(0,r)} K^{V^2(1/\theta)}_{\mathcal{I}}(x,z) \hspace{0.02in} dz, \label{Last}
\end{eqnarray}

\noindent where in the last two equations we have used definitions of the truncated Green function and the truncated Poisson kernel, (\ref{Trunc}) and (\ref{Poiss}) respectively.
We can then apply the bound for truncated Poisson kernels on convex sets that is given in Lemma~2.9 of \cite{Bog5}:
\begin{eqnarray}
A_{t} (x) & \leq & \frac{C}{|x|^d} \frac{t}{V^2 (|x|)} \int_{(U^c \backslash I) \cap B(0,r)} \frac{c_\theta}{|x - z|^d} \frac{V(\delta_{\mathcal{I}} (x))}{V(\delta_{\mathcal{I}^c} (z))} \hspace{0.02in} dz.
\end{eqnarray}

\noindent Our Potter-like bounds in Lemma~\ref{Lemma6} tell us that
\begin{eqnarray}
\int\displaylimits_{(U^c \backslash I) \cap B(0,r)} \frac{1}{|x - z|^d} \frac{V(\delta_{\mathcal{I}} (x))}{V(\delta_{\mathcal{I}^c} (z))} \hspace{0.02in} dz & \leq & \int_{(U^c \backslash I) \cap B(0,r)} \frac{1}{|x - z|^d} \left\{ \left( \frac{\delta_{\mathcal{I}} (x)}{\delta_{\mathcal{I}^c} (z)} \right)^{\underline{\alpha}/2} \vee \left( \frac{\delta_{\mathcal{I}} (x)}{\delta_{\mathcal{I}^c} (z)} \right)^{\overline{\alpha}/2} \right\} dz \\
& \leq & \delta_{\mathcal{I}}^{\underline{\alpha}/2}(x) \int\displaylimits_{(U^c \backslash I) \cap B(0,r)} \frac{dz}{\delta_{\mathcal{I}^c}^{\underline{\alpha}/2} (z) |x - z|^d} + \delta_{\mathcal{I}}^{\overline{\alpha}/2}(x) \int\displaylimits_{(U^c \backslash I) \cap B(0,r)} \frac{dz}{\delta_{\mathcal{I}^c}^{\overline{\alpha}/2} (z) |x - z|^d}. \nonumber
\end{eqnarray}

\noindent In Lemma~3.2 of \cite{Ban2} it is shown that:
\begin{lemma} \label{Lemma8}
For any $\varepsilon \in (0,1/4)$, $w \in \Gamma(0,\varepsilon)$, $M \in (0,\infty]$ we have
\begin{equation}
\int_{(U^{c} \backslash I) \cap B(0,M)} \frac{dz}{\delta_{I^{c}}^{\alpha/2} (z) |z - w|^{\gamma}} \leq
\begin{cases}
c_{\gamma} \varepsilon^{1 - \alpha/2} |w|^{d - \alpha/2 - \gamma} & \text{ for } \gamma > d - \alpha/2, \\
c_{\gamma} \varepsilon^{1 - \alpha/2} M^{d - \alpha/2 - \gamma} & \text{ for } 0 < \gamma < d - \alpha/2.
\end{cases}
\end{equation}
\end{lemma}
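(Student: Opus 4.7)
\textbf{Proof plan for Lemma~\ref{Lemma8}.} By rotational symmetry I may take $v(0) = e_{1}$; write $z = (z_{1}, z')$ with $z' \in \mathbb{R}^{d-1}$ and $\rho = |z'|$. First I would compute $\delta_{I^{c}}(z)$ explicitly. Since the cone $I = \{y : y_{1} > \varepsilon|y|\}$ is rotationally symmetric around $e_{1}$, the nearest point of $\partial I$ to $z$ lies in the $2$-plane spanned by $z$ and $e_{1}$, and a direct calculation gives
\begin{equation*}
\delta_{I^{c}}(z) = \varepsilon \rho - \sqrt{1-\varepsilon^{2}}\, z_{1} \quad (z_{1} \geq 0), \qquad \delta_{I^{c}}(z) = \varepsilon \rho + \sqrt{1-\varepsilon^{2}}\, |z_{1}| \quad (z_{1} \leq 0).
\end{equation*}
The second essential input is an angular separation estimate: for $z$ in the band $U^{c} \setminus I$ and $w \in \Gamma(0,\varepsilon)$ one has $|z\cdot w| \leq |z_{1}||w_{1}| + |z'||w'| \leq 2\varepsilon|z||w|$, so because $\varepsilon < 1/4$,
\begin{equation*}
|z - w|^{2} \geq |z|^{2} + |w|^{2} - 4\varepsilon|z||w| \geq \tfrac{1}{2}\bigl(|z|^{2} + |w|^{2}\bigr) \geq \tfrac{1}{4}(|z| + |w|)^{2}.
\end{equation*}

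Next I would change coordinates to $(u, \rho, \omega)$ with $u = z_{1}$, $\omega = z'/\rho \in S^{d-2}$, so that $dz = \rho^{d-2}\,du\,d\rho\,d\omega$ and the band becomes $|u| \leq \varepsilon\rho/\sqrt{1-\varepsilon^{2}}$. On the upper half $u \in [0, \varepsilon\rho/\sqrt{1-\varepsilon^{2}}]$, substitute $s = \delta_{I^{c}}(z) = \varepsilon\rho - \sqrt{1-\varepsilon^{2}}\,u$; then $s$ runs from $0$ to $\varepsilon\rho$ and
\begin{equation*}
\int_{0}^{\varepsilon\rho/\sqrt{1-\varepsilon^{2}}} \frac{du}{\delta_{I^{c}}(z)^{\alpha/2}} = \frac{1}{\sqrt{1-\varepsilon^{2}}}\int_{0}^{\varepsilon\rho} s^{-\alpha/2}\,ds = c_{\alpha}\,\varepsilon^{1-\alpha/2}\rho^{1-\alpha/2}.
\end{equation*}
On the lower half I would instead use the cruder $\delta_{I^{c}}(z)^{-\alpha/2} \leq (\varepsilon\rho)^{-\alpha/2}$ together with the $O(\varepsilon\rho)$ length of the $u$-interval, which produces the same scaling $\varepsilon^{1-\alpha/2}\rho^{1-\alpha/2}$. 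Since $\rho \leq |z| \leq (1-\varepsilon^{2})^{-1/2}\rho$ in the band, the separation estimate gives $|z-w|^{-\gamma} \leq 2^{\gamma}(\rho+|w|)^{-\gamma}$, and the whole integral collapses to
\begin{equation*}
\int_{(U^{c}\backslash I)\cap B(0,M)} \frac{dz}{\delta_{I^{c}}^{\alpha/2}(z)\,|z-w|^{\gamma}} \;\lesssim\; \varepsilon^{1-\alpha/2}\int_{0}^{M}\frac{\rho^{d-1-\alpha/2}}{(\rho+|w|)^{\gamma}}\,d\rho.
\end{equation*}

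Finally I would split this one-dimensional integral at $\rho = |w|$. On $[0,|w|]$ the bound $(\rho+|w|)^{-\gamma} \leq |w|^{-\gamma}$ gives a contribution of order $|w|^{d-\alpha/2-\gamma}$. On $[|w|, M]$ the bound $(\rho+|w|)^{-\gamma} \leq \rho^{-\gamma}$ reduces things to $\int_{|w|}^{M}\rho^{d-1-\alpha/2-\gamma}\,d\rho$, which equals $O(|w|^{d-\alpha/2-\gamma})$ when $\gamma > d - \alpha/2$ (tail convergent) and $O(M^{d-\alpha/2-\gamma})$ when $\gamma < d-\alpha/2$ (tail divergent). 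These are exactly the two cases of the lemma.

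The main obstacle I expect is managing the upper half of the band, where the near-cancellation in $\delta_{I^{c}}(z) = \varepsilon\rho - \sqrt{1-\varepsilon^{2}}\,z_{1}$ produces the genuine $s^{-\alpha/2}$ singularity. Pulling the factor $|z-w|^{-\gamma}$ outside the inner $s$-integral is only legitimate because of the uniform separation $|z-w| \gtrsim |z| + |w|$; without it the two potential singularities (in $\delta_{I^{c}}$ and in $|z-w|$) could interact and the clean scaling $\varepsilon^{1-\alpha/2}$ would be lost. Once that decoupling is in place, the remaining computation is a routine power-count in $\rho$.
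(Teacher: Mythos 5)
Your argument is correct; note that the paper does not prove this lemma itself (it is imported from Lemma~3.2 of \cite{Ban2}), and your computation is essentially the same as the one the paper carries out for the closely analogous Lemma~\ref{Lem5.7}: the transverse integral across the band $U^{c}\backslash I$ produces the factor $\varepsilon^{1-\alpha/2}$ (via your substitution $s=\delta_{I^{c}}(z)$ in cylindrical coordinates, versus the angular integral of $\sin^{-\alpha/2}\left(\varphi_{1}-\varphi_{\varepsilon}\right)$ in the paper's spherical coordinates), the angular separation gives $|z-w|\geq \tfrac{1}{2}\left(|z|+|w|\right)$ exactly as the paper uses $|x-z|\simeq |x|\vee|z|$, and the final radial power count split at $|w|$ yields the two cases. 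The only point worth adding is the trivial case $|w|>M$ in the regime $0<\gamma<d-\alpha/2$, where the split at $\rho=|w|$ does not occur and one bounds directly by $M^{d-\alpha/2}|w|^{-\gamma}\leq M^{d-\alpha/2-\gamma}$.
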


\noindent Notice that for $z \in (U^c \backslash I) \cap B(0,r)$ we must have $\delta_{I^{c}} (z) = \delta_{\mathcal{I}^{c}} (z)$. Thus for $\gamma = d$ we get:
\begin{eqnarray}
\int\displaylimits_{(U^c \backslash I) \cap B(0,r)} \frac{1}{|x - z|^d} \frac{V(\delta_{\mathcal{I}} (x))}{V(\delta_{\mathcal{I}^c} (z))} \hspace{0.02in} dz & \leq & C \left\{ \delta_{\mathcal{I}}^{\underline{\alpha}/2}(x) \varepsilon^{1 - \underline{\alpha}/2} |x|^{-\underline{\alpha}/2} + \delta_{\mathcal{I}}^{\overline{\alpha}/2}(x) \varepsilon^{1 - \overline{\alpha}/2} |x|^{-\overline{\alpha}/2} \right\} \\
& \leq & C \left\{ \delta_{\mathcal{I}}^{\underline{\alpha}/2}(x) \varepsilon^{1 - \underline{\alpha}/2} \delta_{\mathcal{I}}^{-\underline{\alpha}/2}(x) + \delta_{\mathcal{I}}^{\overline{\alpha}/2}(x) \varepsilon^{1 - \overline{\alpha}/2} \delta_{\mathcal{I}}^{-\overline{\alpha}/2} (x) \right\} \\
& \leq & C \left\{ \varepsilon^{1 - \underline{\alpha}/2} + \varepsilon^{1 - \overline{\alpha}/2} \right\}. \label{Pois}
\end{eqnarray}

\noindent This gives us one bound for $A_{t} (x)$:
\begin{eqnarray}
A_{t} (x) & \leq & \boxed{C \left( \varepsilon^{1 - \underline{\alpha}/2} + \varepsilon^{1 - \overline{\alpha}/2} \right) \frac{1}{|x|^{d}} \frac{V^2(T(t))}{V^2 (|x|)}}.
\end{eqnarray}

\*

Let us now consider $A_{t} (x)$ from another perspective. We divide $A_{t} (x)$ into the following subregions:
\begin{eqnarray}
A_{t}(x) & = & \int_{\mathcal{I}} \int_{0}^{t/2} p_{\mathcal{I}} (l, x, y) \int_{(U^c \backslash I) \cap B(0,r)} \nu(y - z) \hspace{0.02in} p_{\mathcal{U}^{c}} (t - l, x, z) \hspace{0.02in} dz \hspace{0.02in} dl \hspace{0.02in} dy \\
&& + \int_{\mathcal{I}} \int_{t/2}^t p_{\mathcal{I}} (l, x, y) \int_{(U^c \backslash I) \cap B(0,r) \cap \{|x - z| \leq T\}} \nu(y - z) \hspace{0.02in} p_{\mathcal{U}^{c}} (t - l, x, z) \hspace{0.02in} dz \hspace{0.02in} dl \hspace{0.02in} dy \\
&& + \int_{\mathcal{I}} \int_{t/2}^t p_{\mathcal{I}} (l, x, y) \int_{(U^c \backslash I) \cap B(0,r) \cap \{|x - z| > T\}} \nu(y - z) \hspace{0.02in} p_{\mathcal{U}^{c}} (t - l, x, z) \hspace{0.02in} dz \hspace{0.02in} dl \hspace{0.02in} dy \\
&& = \mathbf{I} + \mathbf{II} + \mathbf{III}.
\end{eqnarray}

\*

\subsection{Short jump time: $\mathbf{I}$.} \* \vspace{0.1in}

For $l \in \left[ 0, t/2 \right]$ we can use the bound for the heat kernel given in (\ref{HeatKB}) of Lemma~\ref{Heat1}:
\begin{equation}
p_{\mathcal{U}^{c}} (t - l, x, z) \leq p (t - l, x, z) \leq C T(t - l)^{-d}.
\end{equation}
Monotonicity of $T(t)$ then implies
\begin{equation}
p_{\mathcal{U}^{c}} (t - l, x, z) \leq C T \left( t/2 \right)^{-d}.
\end{equation}
The scaling of $\psi(\xi)$ at infinity implies the scaling of $T(t)$ at $0$, as is shown in Lemma~2.1 of \cite{Bog5}. Hence
\begin{equation}
p_{\mathcal{U}^{c}} (t - l, x, z) \leq C \left( 1/2 \right)^{-d/\underline{\alpha}} T(t)^{-d} = CT(t)^{-d}.
\end{equation}
Thus
\begin{eqnarray}
\mathbf{I} & \leq & C T(t)^{-d} \int_{\mathcal{I}} \int_{0}^{t/2} p_{\mathcal{I}} (l, x, y) \int_{(U^c \backslash I) \cap B(0,r)} \nu(y - z) \hspace{0.02in} dz \hspace{0.02in} dl \hspace{0.02in} dy \\
& \leq & C T(t)^{-d} \int_{(U^c \backslash I) \cap B(0,r)}K_{\mathcal{I}}^{V^2(1/\theta)} (x,z) \hspace{0.02in} dz.
\end{eqnarray}

\noindent It now follows from our calculations between (\ref{Last}) and (\ref{Pois}) above that
\begin{eqnarray}
\mathbf{I} & \leq & \boxed{C \left( \varepsilon^{1 - \underline{\alpha}/2} + \varepsilon^{1 - \overline{\alpha}/2} \right) T(t)^{-d}}.
\end{eqnarray}

\*

\subsection{Long exit time and short jumps: $\mathbf{II}$.} \* \vspace{0.1in}

The following bound for the heat kernel is given in Lemma 2.6 of \cite{Bog5}:
\begin{lemma} \label{Lemma10}
Assume $D$ is convex. There exists a constant $c_{\theta}$ such that if $T(t) < 1/\theta \vee |x - y|$, then
\begin{equation}
p_{D} (t, x, y) \leq c_\theta \left( \frac{V(\delta_{D} (x))}{V(T)} \wedge 1 \right) \left( \frac{V(\delta_{D} (y))}{V(T)} \wedge 1 \right) \left( \frac{t}{|x-y|^d V^2(|x-y|)} \wedge T(t)^{-d} \right). \label{HeatBound}
\end{equation}
\end{lemma}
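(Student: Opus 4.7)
The target bound factors as (boundary decay at $x$) $\times$ (boundary decay at $y$) $\times$ (free-kernel decay between $x$ and $y$). The plan is to first establish a convex-domain survival probability estimate that produces the two boundary factors, and then to combine it with Chapman-Kolmogorov in the short-range regime $|x-y| \leq T(t)$ and with an Ikeda-Watanabe type jump decomposition in the long-range regime $|x-y| > T(t)$.

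The survival probability estimate I would establish is
$$\mathbb{P}^x(\tau_D > s) \leq C \left( \frac{V(\delta_D(x))}{V(T(s))} \wedge 1 \right), \qquad T(s) < 1/\theta,$$
valid for any convex $D$. If $H$ denotes a supporting half-space of $D$ at the boundary point closest to $x$, then $D \subset H$, so $\tau_D \leq \tau_H$. The half-space exit problem for $X^1$ is exactly the setting in which $V$ was defined via the ascending ladder-height process, so the $V$-ratio bound comes directly from that construction. The Potter-like Lemma~\ref{Lemma6} then lets me convert $V$-ratios into power bounds wherever required.

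For $|x-y| \leq T(t)$, I would apply Chapman-Kolmogorov with a threefold time split:
$$p_D(t,x,y) = \iint_{D \times D} p_D(t/3, x, z_1) \, p_D(t/3, z_1, z_2) \, p_D(t/3, z_2, y) \, dz_1 \, dz_2.$$
Bounding the middle factor by $p_{t/3}(z_1 - z_2) \leq C T(t)^{-d}$ (from Lemma~\ref{Heat1} and the scaling of $T$) and using $\int_D p_D(t/3, x, z)\,dz = \mathbb{P}^x(\tau_D > t/3)$, this yields
$$p_D(t,x,y) \leq C T(t)^{-d} \, \mathbb{P}^x(\tau_D > t/3) \, \mathbb{P}^y(\tau_D > t/3),$$
which together with the survival bound delivers the $T(t)^{-d}$ branch of the minimum with both boundary factors. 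For $|x-y| > T(t)$, I would use the Ikeda-Watanabe formula (as in (\ref{IWEq})) to write $p_D(t,x,y)$ as an integral against $\nu$; a jump of size comparable to $|x-y|$ is forced, and since $\nu(z) \leq C/(|z|^d V^2(|z|))$, this produces the $t/(|x-y|^d V^2(|x-y|))$ factor. Combining with survival estimates at both endpoints yields the second branch.

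The main technical obstacle will be the long-range case, where one must partition time between pre-jump and post-jump segments so that both boundary factors $V(\delta_D(x))/V(T)$ and $V(\delta_D(y))/V(T)$ and the spatial decay $t/(|x-y|^d V^2(|x-y|))$ emerge together. Convexity of $D$ is essential here: it allows the Green and Poisson kernels of $D$ on either side of the jump to be dominated by those of the relevant supporting half-space, where the $V$-based machinery applies. The final bound is obtained by taking the pointwise minimum of the two regimes, and the capped-at-$1$ factors come from combining these estimates with the trivial bound $p_D \leq p_t$.
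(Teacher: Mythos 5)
You should first note that the paper itself offers no proof of this statement: it is quoted verbatim from Lemma~2.6 of \cite{Bog5}, so there is no internal argument to compare against, and your attempt has to be judged on its own. The parts you spell out are sound and are indeed the standard ingredients behind the cited result. For convex $D$ the supporting half-space $H$ at the point of $\partial D$ nearest to $x$ satisfies $\delta_H(x)=\delta_D(x)$ and $D\subset H$, so the survival bound $\mathbb{P}^x(\tau_D>s)\le C\bigl(1\wedge V(\delta_D(x))/V(T(s))\bigr)$ reduces to the half-line estimate from fluctuation theory; and the threefold Chapman--Kolmogorov split, with the middle factor bounded by $p_{t/3}\le CT(t/3)^{-d}\le C'T(t)^{-d}$, gives $p_D(t,x,y)\le CT(t)^{-d}\,\mathbb{P}^x(\tau_D>t/3)\,\mathbb{P}^y(\tau_D>t/3)$. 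Since $V(T(s))=\sqrt{s}$, this settles the regime $|x-y|\le T(t)$, where the last factor in \eqref{HeatBound} is exactly $T(t)^{-d}$.

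The genuine gap is the long-range regime $|x-y|>T(t)$, which is the real content of the lemma and which you only gesture at. The claim that ``a jump of size comparable to $|x-y|$ is forced'' is not literally true (the displacement can be assembled from several moderate jumps); the free-kernel bound of Lemma~\ref{Heat1} already encodes the decay $t/(|x-y|^dV^2(|x-y|))$, but feeding it naively into the semigroup identity (splitting at $|z-x|\gtrless |x-y|/2$) produces only \emph{one} boundary factor times the spatial decay. To obtain both factors $V(\delta_D(x))/V(T)$ and $V(\delta_D(y))/V(T)$ together with the spatial decay you need the idea your sketch does not supply: decompose at the first exit from $D\cap B(x,|x-y|/2)$ via the strong Markov property and Ikeda--Watanabe, and split according to whether the exit jump lands near $y$ or not. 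In the first case the expected occupation time before the jump (controlled through the survival estimate) combined with $\nu(z)\le C/(|z|^dV^2(|z|))$ yields the factor at $x$ and the spatial factor, while integrating the remaining kernel in space yields a survival probability at $y$; in the second case one bounds the remaining kernel by the free kernel at distance $\gtrsim|x-y|$ and then invokes the symmetry $p_D(t,x,y)=p_D(t,y,x)$ to recover the factor at $y$. Saying one must ``partition time between pre-jump and post-jump segments'' restates the difficulty rather than resolving it. A smaller issue: all your estimates are run under $T(t)<1/\theta$, whereas the hypothesis $T(t)<1/\theta\vee|x-y|$ also admits $T(t)\ge 1/\theta$ with $|x-y|>T(t)$; covering that case requires localized versions of Lemma~\ref{Heat1} and of the survival bound, which your proposal does not address.
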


\noindent Let $S := (U^c \backslash I) \cap B(0,r) \cap \{|x - z| \leq T\}$. For $l \in [ t/2, t )$ we can use the bounds from Lemma~\ref{Heat1} and Lemma~\ref{Lemma10} to get
\begin{eqnarray}
\mathbf{II} & = & \int_{\mathcal{I}} \int_{t/2}^t p_{\mathcal{I}}(l, x, y) \int_{S} \nu(y - z) p_{\mathcal{U}^{c}} (t - l, x, z) \hspace{0.02in} dz \hspace{0.02in} dl \hspace{0.02in} dy \\
& \leq & C \int_{\mathcal{I}} \int_{t/2}^t T(t)^{-d} \frac{V(\delta_{\mathcal{I}} (y))}{V(T(t))} \int_{S} \frac{1}{|y - z|^d V^2 (|y - z|)} p_{\mathcal{U}^{c}} (t - l, x, z) \hspace{0.02in} dz \hspace{0.02in} dl \hspace{0.02in} dy \\
& = & C T(t)^{-d} \int_{\mathcal{I}} \int_{S} \frac{V(\delta_{\mathcal{I}}(y))}{V(T(t))} \frac{1}{|y - z|^d V^2 (|y - z|)} \int_{t/2}^t p_{\mathcal{U}^{c}} (t - l, x, z) \hspace{0.02in} dl \hspace{0.02in} dz \hspace{0.02in} dy \\
& \leq & C T(t)^{-d} \int_{S} \int_{\mathcal{I}} \frac{V(\delta_{\mathcal{I}}(y))}{V(T(t))} \frac{1}{|y - z|^d V^2 (|y - z|)} G_{\mathcal{U}^{c}}^{t/2} (x,z) \hspace{0.02in} dy \hspace{0.02in} dz.
\end{eqnarray}

\noindent It follows from bounds given in \cite{Bog2} and \cite{Bog5} that
\begin{eqnarray}
\mathbf{II} & \leq & C \frac{T(t)^{-d}}{V(T(t))} \int_{S} \int_{\mathcal{I}} \frac{V(\delta_{\mathcal{I}}(y))}{|y - z|^d V^2 (|y - z|)} \frac{V(|x|) V(\delta_{\mathcal{U}^{c}}(z))}{|x - z|^{d}} \hspace{0.02in} dy \hspace{0.02in} dz.
\end{eqnarray}

\noindent By construction $\delta_{\mathcal{I}} (y), \delta_{\mathcal{I}}(z) \leq |y - z|$ and so
\begin{eqnarray}
\mathbf{II} & \leq & C T(t)^{-d} \frac{V(|x|)}{V(T(t))} \int_{S} \int_{\mathcal{I}} \frac{1}{|y - z|^d V (|y - z|)} \frac{V(\delta_{\mathcal{U}^{c}}(z))}{|x - z|^{d}} \hspace{0.02in} dy \hspace{0.02in} dz \nonumber \\
& \leq & C T(t)^{-d} \frac{V(|x|)}{V(T(t))} \int_{S} \int_{\mathcal{I}} \frac{\delta_{\mathcal{I}}^{\underline{\alpha}/2}(z)}{|y - z|^{d+\underline{\alpha}/2} V (\delta_{\mathcal{I}}(z))} \frac{V(\delta_{\mathcal{U}^{c}}(z))}{|x - z|^{d}} \hspace{0.02in} dy \hspace{0.02in} dz \nonumber \\
& \leq & C T(t)^{-d} \frac{V(|x|)}{V(T(t))} \int_{S} \frac{\delta_{\mathcal{I}}^{\underline{\alpha}/2}(z)}{| x - z |^{d}} \frac{V(\delta_{\mathcal{U}^{c}}(z))}{V(\delta_{\mathcal{I}}(z))} \int_{\mathcal{I}} \frac{1}{|y - z|^{d+\underline{\alpha}/2}} \hspace{0.02in} dy \hspace{0.02in} dz.
\end{eqnarray}

\noindent We have seen in (\ref{Comp}) that $|x - z| > (1-2\varepsilon^2) |x|$. Thus for these short jumps we have $(1-2\varepsilon^2) |x| < T(t)$ and hence $V(|x|) < c V(T(t))$, for some constant $c$. Therefore
\begin{eqnarray}
\mathbf{II} & \leq & C T(t)^{-d} \int_{S} \frac{\delta_{\mathcal{I}}^{\underline{\alpha}/2}(z)}{| x - z|^{d}} \frac{V(\delta_{\mathcal{U}^{c}}(z))}{V(\delta_{\mathcal{I}}(z))} \int_{\mathcal{I}} \frac{1}{|y - z|^{d+\underline{\alpha}/2}} \hspace{0.02in} dy \hspace{0.02in} dz \\
& \leq & C T(t)^{-d} \int_{S} \frac{\delta_{\mathcal{I}}^{\underline{\alpha}/2}(z)}{| x - z|^{d}}  \frac{V(\delta_{\mathcal{U}^{c}}(z))}{V(\delta_{\mathcal{I}}(z))} \int_{B(z,\delta_{\mathcal{I}}(z))^c} \frac{1}{|y - z|^{d + \underline{\alpha}/2}} \hspace{0.02in} dy \hspace{0.02in} dz.
\end{eqnarray}

\noindent Changing to polar coordinates:
\begin{eqnarray}
\mathbf{II} & \leq & C T(t)^{-d} \int_{S} \frac{\delta_{\mathcal{I}}^{\underline{\alpha}/2}(z)}{| x - z|^{d}}  \frac{V(\delta_{\mathcal{U}}^{c}(z))}{V(\delta_{\mathcal{I}}(z))} \int_{\delta_{\mathcal{I}^c}(z)}^\infty \frac{1}{r^{d + \underline{\alpha}/2}} r^{d-1} \hspace{0.02in} dr \hspace{0.02in} dz \\
& = & C T(t)^{-d} \int_{S} \frac{\delta_{\mathcal{I}}^{\underline{\alpha}/2}(z)}{| x - z|^{d}}  \frac{V(\delta_{\mathcal{U}^{c}}(z))}{V(\delta_{\mathcal{I}}(z))} \frac{1}{\delta_{\mathcal{I}}^{\underline{\alpha}/2} (z)} \hspace{0.02in} dz \\
& = & C T(t)^{-d} \int_{S} \frac{1}{| x - z|^{d}}  \frac{V(\delta_{\mathcal{U}^{c}}(z))}{V(\delta_{\mathcal{I}}(z))} \hspace{0.02in} dz. \label{Num2}
\end{eqnarray}

\begin{lemma} \label{Lem5.7}
For any $\varepsilon \in (0,1/4)$, $x \in \Gamma(0,\varepsilon)$, $r \in (0, \infty)$ we have
\begin{equation}
\int_{\left( U^{c} \backslash I \right) \cap B(0,r)} \frac{1}{|x - z|^{d}} \frac{\delta_{\mathcal{U}^{c}}^{\alpha/2}(z)}{\delta_{\mathcal{I}}^{\alpha/2}(z)} \hspace{0.02in} dz \leq c \varepsilon^{1 - \alpha/2}.
\end{equation}
\end{lemma}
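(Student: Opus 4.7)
The plan is to extend the method of Lemma~\ref{Lemma8} (Lemma~3.2 of \cite{Ban2}) to accommodate the new numerator factor $\delta_{\mathcal{U}^{c}}^{\alpha/2}(z)$, using the wedge geometry of $U^{c}\setminus I$ and a tubular-neighborhood change of variables along $\partial I$. First I would establish two geometric reductions on the integration domain. For $z\in(U^{c}\setminus I)\cap B(0,r)$, the foot of the perpendicular from $z$ onto $\partial I$ has squared norm $|z|^{2}-\delta_{I}(z)^{2}\le|z|^{2}\le r^{2}$, so it lies in $\overline{B(0,r)}\cap\partial I\subset\partial\mathcal{I}$; this gives the identity $\delta_{\mathcal{I}}(z)=\delta_{I}(z)$ noted in the excerpt. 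The same argument applied to $\partial U$ yields $\delta_{\mathcal{U}^{c}}(z)\le\delta_{U}(z)$. A direct computation with the cone equations $\partial I=\{y_{1}=\varepsilon|y|\}$ and $\partial U=\{y_{1}=-\varepsilon|y|\}$ gives $\delta_{I}(z)=\varepsilon|z'|-\sqrt{1-\varepsilon^{2}}\,z_{1}$ and $\delta_{U}(z)=\varepsilon|z'|+\sqrt{1-\varepsilon^{2}}\,z_{1}$, and hence the wedge identity
\[
\delta_{I}(z)+\delta_{U}(z)=2\varepsilon|z'|\le 2\varepsilon|z|, \qquad z\in U^{c}\setminus I ,
\]
so that pointwise
\[
\frac{\delta_{\mathcal{U}^{c}}^{\alpha/2}(z)}{\delta_{\mathcal{I}}^{\alpha/2}(z)}\;\le\;\frac{\bigl(2\varepsilon|z|-\delta_{I}(z)\bigr)^{\alpha/2}}{\delta_{I}^{\alpha/2}(z)}.
\]

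Next I would switch to tubular coordinates $(t,s,\hat\omega')\in(0,r)\times(0,s_{\max}(t))\times S^{d-2}$ adapted to $\partial I$: writing $z=t(\varepsilon,\sqrt{1-\varepsilon^{2}}\hat\omega')+s(-\sqrt{1-\varepsilon^{2}},\varepsilon\hat\omega')$ gives $|z|^{2}=t^{2}+s^{2}$ and $s=\delta_{I}(z)$, with $s_{\max}(t)\lesssim\varepsilon t$ coming from $z\in U^{c}$. A direct calculation yields the Jacobian $dz=(t\sqrt{1-\varepsilon^{2}}+s\varepsilon)^{d-2}\,dt\,ds\,d\sigma(\hat\omega')\asymp t^{d-2}\,dt\,ds\,d\sigma(\hat\omega')$. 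The $s$-integral then evaluates via $u=s/(2\varepsilon t)$ to a Beta function,
\[
\int_{0}^{s_{\max}(t)}\frac{(2\varepsilon t-s)^{\alpha/2}}{s^{\alpha/2}}\,ds\;\asymp\;\varepsilon t\cdot B\!\bigl(1-\tfrac{\alpha}{2},\,1+\tfrac{\alpha}{2}\bigr),
\]
a finite constant for $\alpha<2$; this extracts the crucial factor of $\varepsilon$.

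For the outer radial and angular integration, I would use the near-orthogonality estimate $|x-z|\gtrsim\max(|x|,t)$, which follows from $x\in\Gamma(0,\varepsilon)$ hugging the $e_{1}$-axis while $z$ lies in the equatorial wedge band. The remaining integral reduces to $\int_{0}^{r}\varepsilon\,t^{d-1}\bigl(|x|^{2}+t^{2}\bigr)^{-d/2}\,dt$ times the $(d-2)$-sphere measure. On the subregion $t\le|x|$ this matches Lemma~\ref{Lemma8} with $\gamma=d$ and directly contributes $c\,\varepsilon^{1-\alpha/2}$. The main obstacle, which I expect to be the hardest part of the argument, is the subregion $t>|x|$: a naive application of $\delta_{\mathcal{U}^{c}}(z)\le 2\varepsilon|z|$ produces a spurious factor of $\log(r/|x|)$ absent from the statement. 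Closing this gap requires the sharper geometric estimate $\delta_{\mathcal{U}^{c}}(z)\lesssim\delta_{U}(z)\wedge(r-|z|)$, with the second term coming from the spherical component of $\partial\mathcal{U}$, which suppresses the contribution as $t$ approaches $r$ and yields the claimed uniform bound $c\,\varepsilon^{1-\alpha/2}$.
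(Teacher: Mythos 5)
Your first two steps reproduce the paper's argument in different notation: the explicit formulas $\delta_{I}(z)=\varepsilon|z'|-\sqrt{1-\varepsilon^{2}}\,z_{1}$ and $\delta_{U}(z)=\varepsilon|z'|+\sqrt{1-\varepsilon^{2}}\,z_{1}$, the identification $\delta_{\mathcal{I}}=\delta_{I}$ on the wedge, the bound $\delta_{\mathcal{U}^{c}}\leq\delta_{U}$, and your cone-adapted coordinates with Jacobian $\asymp t^{d-2}$ are exactly the paper's polar-coordinate setup ($\delta_{\mathcal{I}}(z)=\rho\sin(\varphi_{1}-\varphi_{\varepsilon})$, $\delta_{\mathcal{U}^{c}}(z)\leq\rho\sin(\varphi_{1}+\varphi_{\varepsilon})$, split into $V_{1}=(U^{c}\backslash I)\cap B(0,|x|)$ and $V_{2}$, with $|x-z|\gtrsim\max(|x|,|z|)$), and that part is fine. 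The genuine gap is the new ingredient you invoke for the outer region $t>|x|$: the inequality $\delta_{\mathcal{U}^{c}}(z)\lesssim\delta_{U}(z)\wedge(r-|z|)$ is false. The spherical portion of $\partial\mathcal{U}$ is only $\partial B(0,r)\cap\overline{U}$, which lies on the far side of the cone $\partial U$ from the wedge; for $z$ in the angular middle of $U^{c}\backslash I$ with $|z|$ close to $r$ one has $\delta_{\mathcal{U}^{c}}(z)\simeq\varepsilon|z|\simeq\varepsilon r$ while $r-|z|$ is arbitrarily small, so no bound $\delta_{\mathcal{U}^{c}}(z)\leq C(r-|z|)$ can hold (the correct statement goes the other way: $\delta_{\mathcal{U}^{c}}(z)\geq\min\{\delta_{U}(z),\,r-|z|\}$, together with $\delta_{\mathcal{U}^{c}}(z)\leq\delta_{U}(z)$). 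Moreover, even if your inequality were true it would not remove the logarithm you are worried about: that factor accumulates over the dyadic shells $|x|\lesssim|z|\lesssim r/2$, where $r-|z|\geq r/2\geq|z|$ and your minimum reduces to $\delta_{U}(z)$, so the proposed suppression only affects the outermost shell. As written, the region $t>|x|$ in your argument does not close.

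For comparison, the paper uses no sphere-distance mechanism at all on $V_{2}$: it keeps the same polar computation, bounds $\sin(\varphi_{\varepsilon}+\varphi_{1})\sin^{d-2}(\varphi_{1})\leq1$, and extracts $\varepsilon^{1-\alpha/2}$ purely from the angular integral $\int_{0}^{\pi-2\varphi_{\varepsilon}}\varphi^{-\alpha/2}\,d\varphi$, using $\pi-2\varphi_{\varepsilon}\leq c\varepsilon$; the radial factor $\int_{|x|}^{r}\rho^{-1}\,d\rho$ is precisely the quantity you flagged, and the paper passes over it without comment. So your diagnosis that this radial factor requires an argument is reasonable, but the repair has to come from a different mechanism than the (false) bound by $r-|z|$ --- for instance from the unused smallness $\delta_{\mathcal{U}^{c}}(z)\leq c\,\varepsilon|z|$ in the numerator, which yields an extra $\varepsilon^{\alpha/2}$ per shell, possibly combined with restrictions on $|x|$ and $r$ available where the lemma is applied --- and your write-up does not supply one.
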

\begin{proof}
Let us use polar coordinates $\left( \rho, \varphi_{1}, ..., \varphi_{d} \right)$, with center $q = 0$ and principal axis $v(0) = (1, 0,..., 0)$. We prove this lemma for the case $d \geq 3$, the case with $d = 2$ is essentially the same but with different restrictions on the angle. As above, we let $\varphi_{\varepsilon} \in [ 0,\pi/2]$ be the angle such that $\cos \left( \varphi_{\varepsilon} \right) = \varepsilon$. Then $$U^{c} \backslash I = \left\{ \left( \rho, \varphi_{1}, ..., \varphi_{d-1} \right) : \varphi_{1} \in \left( \varphi_{\varepsilon}, \pi - \varphi_{\varepsilon} \right) \right\}, \hspace{0.2in}\delta_{\mathcal{I}}(z) = \rho \sin \left( \varphi_{1} - \varphi_{\varepsilon} \right), \hspace{0.2in} and \hspace{0.2in} \delta_{\mathcal{U}^{c}} (z) = \rho \sin \left( \varphi_{\varepsilon} + \varphi_{1} \right)$$ for $z \in U^{c} \backslash I$.

Let $V_{1} = \left( U^{c} \backslash I \right) \cap B(0,|x|)$ and $V_{2} = \left( U^{c} \backslash I \right) \cap B^{c}(0,|x|) \cap B(0,r)$. Recall, $(1 - 2\varepsilon^{2}) |x|, (1 - 2 \varepsilon^{2}) |z| \leq |x - z|$ and notice that for $z \in V_{1}$ we have $|x - z | \leq 2|x|$, thus $|x - z| \simeq |x|$ for $z \in V_{1}$. Similarly, if $z \in V_{2}$, then $|x - z| \simeq |z|$. Thus
\begin{eqnarray}
\int_{V_{1}} \frac{1}{|x - z|^{d}} \frac{\delta_{\mathcal{U}^{c}}^{\alpha/2}(z))}{\delta_{\mathcal{I}}^{\alpha/2}(z))} dz & \leq & \frac{c}{|x|^{d}} \int_{V_{1}}  \frac{\delta_{\mathcal{U}^{c}}^{\alpha/2}(z)}{\delta_{\mathcal{I}}^{\alpha/2}(z)} dz \\
& \leq & \frac{c}{|x|^{d}} \int_{0}^{|x|} \int_{\varphi_{\varepsilon}}^{\pi - \varphi_{\varepsilon}}  \frac{\rho^{\alpha/2} \sin \left( \varphi_{\varepsilon} + \varphi_{1} \right) \rho^{d -1} \sin^{d-2} \left( \varphi_{1} \right)}{\rho^{\alpha/2} \sin^{\alpha/2} \left( \varphi_{1} - \varphi_{\varepsilon} \right)} d\varphi_{1} d\rho \\
& \leq & \frac{c}{|x|^{d}} \int_{0}^{|x|} \rho^{d - 1} d\rho \int_{\varphi_{\varepsilon}}^{\pi - \varphi_{\varepsilon}}  \frac{1}{ \sin^{\alpha/2} \left( \varphi_{1} - \varphi_{\varepsilon} \right)} d\varphi_{1} \\
& \leq & c \int_{0}^{\pi - 2\varphi_{\varepsilon}} \frac{1}{ \varphi^{\alpha/2} } d\varphi \\
& \leq & c\varepsilon^{1 - \alpha/2}.
\end{eqnarray}

\noindent The last inequality follows from the fact that for $\varepsilon \in (0,1/4)$ we have $\sin(\pi - 2 \varphi_{\varepsilon}) \simeq 2\sin(\pi/2 - \varphi_{\varepsilon})$, so $\pi - 2\varphi_{\varepsilon} \leq c \varepsilon$. On the remaining domain we have
\begin{eqnarray}
\int_{V_{2}} \frac{1}{|x - z|^{d}} \frac{\delta_{\mathcal{U}^{c}}^{\alpha/2}(z))}{\delta_{\mathcal{I}}^{\alpha/2}(z))} dz & \leq & \int_{V_{2}} \frac{\delta_{\mathcal{U}^{c}}^{\alpha/2}(z)}{|z|^{d} \delta_{\mathcal{I}}^{\alpha/2}(z)} dz \\
& \leq & \int_{|x|}^{r} \int_{\varphi_{\varepsilon}}^{\pi - \varphi_{\varepsilon}}  \frac{\rho^{\alpha/2} \sin \left( \varphi_{\varepsilon} + \varphi_{1} \right) \rho^{d -1} \sin^{d-2} \left( \varphi_{1} \right)}{\rho^{d+\alpha/2} \sin^{\alpha/2} \left( \varphi_{1} - \varphi_{\varepsilon} \right)} d\varphi_{1} d\rho \\
& \leq & \int_{|x|}^{r} \rho^{- 1} d\rho \int_{0}^{\pi - 2\varphi_{\varepsilon}} \frac{1}{ \varphi^{\alpha/2} } d\varphi \\
& \leq & c \varepsilon^{1 - \alpha/2}.
\end{eqnarray}
\end{proof}

It now follows from (\ref{Num2}) and Lemma~\ref{Lem5.7} that
\begin{eqnarray}
\mathbf{II} & \leq & \boxed{C\left(\varepsilon^{1 - \underline{\alpha}/2} + \varepsilon^{1 - \overline{\alpha}/2} \right) T(t)^{-d}}.
\end{eqnarray}

\*

\subsection{Long exit time and large jumps: $\mathbf{III}$.} \* \vspace{0.1in}

We now suppose that $|x - z| > T$. Let $Q := (U^c \backslash I) \cap B(0,r) \cap \{|x - z| > T\}$. Again using the bound from (\ref{HeatBound}) of Lemma~\ref{Lemma10} we get
%\begin{equation}
%p_{\mathcal{U}^{c}} (t - l, x, z) \leq C T(t - l)^{-d} \frac{V(\delta_{\mathcal{U}^{c}} (z))}{V(T(t - l))} \left( 1 \wedge \frac{T(t - l)^d V^{2}(T(t - l))}{|x - z|^{d} V^{2}( |x - z| )} \right).
%\end{equation}
%
%\noindent Applying this to our choice of $\mathbf{III}$ we get:
\begin{eqnarray}
\mathbf{III} & \leq & C \int_{\mathcal{I}} \int_{t/2}^t p_{\mathcal{I}}(l,x,y) \int_{Q} \nu (y - z) T(t-l)^{-d} \frac{V(\delta_{\mathcal{U}^{c}} (z))}{V(T(t-l))} \left( 1 \wedge \frac{T(t-l)^d V^{2}(T(t-l))}{|x - z|^{d} V^{2}( |x - z| )} \right) \hspace{0.02in} dz \hspace{0.02in} dl \hspace{0.02in} dy \nonumber \\
& \leq & C T(t)^{-d} \int_{Q} K_{\mathcal{I}}^{V^{2}(1/\theta)} (x,z) \frac{V(\delta_{\mathcal{U}^{c}} (z))}{V(T(t))} \left( 1 \wedge \frac{T(t)^d V^{2}(T(t))}{|x - z|^{d} V^{2}( |x - z| )} \right) \hspace{0.02in} dz.
\end{eqnarray}

\noindent We can again use the Poisson kernel bound from Lemma~2.9 in \cite{Bog5}:
\begin{eqnarray}
\mathbf{III} & \leq & C T(t)^{-d} \int_{Q} \frac{V(|x|)}{V(\delta_{\mathcal{I}}(z))} \frac{1}{|x - z|^{d}} \frac{V(\delta_{\mathcal{U}^{c}} (z))}{V(T(t))} \left( 1 \wedge \frac{T(t)^d V^{2}(T(t))}{|x - z|^{d} V^{2}( |x - z| )} \right) \hspace{0.02in} dz \\
& \leq & C V(T(t)) \int_{Q} \frac{V(|x|)}{|x - z|^{2d} V^{2} (|x - z|)} \frac{V(\delta_{\mathcal{U}^{c}} (z))}{V(\delta_{\mathcal{I}} (z))} \hspace{0.02in} dz \\
& \leq & C V(T(t)) \int_{Q} \frac{1}{|x - z|^{2d} V (|x - z|)} \frac{V(\delta_{\mathcal{U}^{c}} (z))}{V(\delta_{\mathcal{I}} (z))} \hspace{0.02in} dz \\
& \leq & C \frac{V(T(t))}{\left( T(t) \right)^{d} V \left( T(t) \right)} \int_{Q} \frac{V(\delta_{\mathcal{U}^{c}} (z))}{|x - z|^{d} V(\delta_{\mathcal{I}} (z))} \hspace{0.02in} dz.
\end{eqnarray}

\nonumber We can use Lemma~\ref{Lem5.7} again to get
\begin{equation}
\mathbf{III} \leq \boxed{C\left(\varepsilon^{1 - \underline{\alpha}/2} + \varepsilon^{1 - \overline{\alpha}/2} \right) T(t)^{-d}}.
\end{equation}

Therefore
\begin{eqnarray}
A_{t} (x) & \leq & \boxed{C\left(\varepsilon^{1 - \underline{\alpha}/2} + \varepsilon^{1 - \overline{\alpha}/2} \right) \left( T(t)^{-d} \wedge  \frac{1}{|x|^d}\frac{V^{2}(T(t))}{V^{2}(|x|)} \right)}.
\end{eqnarray}

\*

$\bm{B_{t}(x)}$:
It remains to find a bound for
\begin{eqnarray}
B_{t} (x) \leq \int_{\mathcal{I}} \int_{0}^t p_{\mathcal{I}} (l, x, y) \int_{B^c (0,r)} \nu(y - z) p (t - l, x, z) \hspace{0.02in} dz \hspace{0.02in} dl \hspace{0.02in} dy.
\end{eqnarray}

\noindent By assumption $x \in \Gamma_{2s}(v(q),\varepsilon)$, $s < r/4$, and $z \in B^c (0,r)$. Thus $|x - z| > r/2 > 2s$. Combining this with the bound for the heat kernel in Lemma~\ref{Heat1}, we get:
\begin{eqnarray}
p (t - l, x, z) & \leq & C \left( T(t - l)^{-d} \wedge \frac{1}{|x - z|^d} \frac{t - l}{V^2 (|x - z|)} \right) \\
& \leq & C \left( T(t - l)^{-d} \wedge \frac{1}{s^d} \frac{t - l}{V^2(s)} \right). 
\end{eqnarray}

Thus
\begin{eqnarray}
B_{t} (x) & \leq & C \left( T(t - l)^{-d} \wedge \frac{1}{s^d} \frac{V^{2}(T(t))}{V^2(s)} \right) \int_{\mathcal{I}} \int_{0}^t p_{\mathcal{I}}(l,x,y) \int_{B^c (0,r)} \nu(y - z) dz dl dy  \\ 
& \leq & C \left( T(t - l)^{-d} \wedge \frac{1}{s^d} \frac{V^{2}(T(t))}{V^2(s)} \right) \mathbb{P}^{x} \left( \tau_{\mathcal{I}} < t, \left| X \left( \tau_{\mathcal{I}} \right) \right| > r \right) \\
& \leq & C \frac{1}{s^d} \frac{V^{2}(T(t))}{V^2(s)}. 
\end{eqnarray}

We chose $s = T(t)/\sqrt{\varepsilon}$. Thus
\begin{eqnarray}
B_{t} (x) & \leq & C \frac{\left( \sqrt{\varepsilon} \right)^{d} }{T(t)^d} \frac{V^{2}(T(t))}{V^2 \left( \frac{T(t)}{\sqrt{\varepsilon} }\right)}.
\end{eqnarray}

Since $x \in \Gamma_{2s}(v(q),\varepsilon)$ it also tells us that $|x| < 2s = 2T(t)/\sqrt{\varepsilon}$. Hence
\begin{equation}
B_{t} (x) \leq C \frac{\left( \sqrt{\varepsilon} \right)^{\beta}}{T(t)^{\beta}} \frac{1}{\left| x \right|^{d - \beta}} \frac{V^{2}(T(t))}{V^2 \left( \frac{T(t)}{\sqrt{\varepsilon} }\right)} \leq C \frac{\left( \sqrt{\varepsilon} \right)^{\beta}}{T(t)^{\beta}} \frac{1}{\left| x \right|^{d - \beta}} \frac{V^{2}(T(t))}{V^2 \left( |x| \right)}.
\end{equation}

Letting $\beta = d$ and $\beta = 1$ we get
\begin{equation}
B_{t} (x) \leq C \left( \frac{\left( \sqrt{\varepsilon} \right)^{d}}{T(t)^{d}} \wedge \frac{\sqrt{\varepsilon} }{T(t)} \frac{1}{\left| x \right|^{d - 1}} \frac{V^{2}(T(t))}{V^2 \left( |x| \right) } \right) \leq \boxed{C \sqrt{\varepsilon} T(t)^{-d} \left( 1 \wedge \frac{T(t)^{d-1}}{|x|^{d-1}} \frac{V^{2} \left(T(t) \right)}{V^{2} \left( |x| \right)} \right)}.
\end{equation}

Therefore, combining our bounds for $A_{t}(x)$ and $B_{t}(x)$, we get
\begin{eqnarray}
r_{\mathcal{I}} (t,x,x) - r_{\mathcal{U}^{c}} (t,x,x) \leq \boxed{C \left\{ \left(\varepsilon^{1 - \underline{\alpha}/2} + \varepsilon^{1 - \overline{\alpha}/2} \right) \vee \sqrt{\varepsilon} \right\} T(t)^{-d} \left( 1 \wedge \frac{T(t)^{d-1}}{|x|^{d-1}} \frac{V^{2}(T(t))}{V^2(|x|)} \right)}.
\end{eqnarray}

\end{proof}

\vfill


\begin{thebibliography}{19}
\bibitem{Ban1}
\textbf{R.~Ba\~{n}uelos and T.~Kulczycki.} Trace estimates for stable processes. \textit{Probab. Theory Related Fields} 142 (2008), no. 3-4, 318-338.

\bibitem{Ban2}
\textbf{R.~Ba\~{n}uelos, T.~Kulczycki, and B.~Siudeja.} On the trace of symmetric stable processes on Lipschitz domains. \textit{J. Funct. Anal.} 257 (2009), no. 10, 3329-3352.

\bibitem{Ban3}
\textbf{R.~Ba\~{n}uelos, J.~B.~Mijena, and E.~Nane.} Two-term trace estimates for relativistic stable processes. \textit{J. Math. Anal. Appl.} 410 (2014), no. 2, 837-846.

\bibitem{Bog1}
\textbf{R.~Bogdan, T.~Grzywny, and M.~Ryznar.} Density and tails of unimodal convolution semigroups. \textit{J. Funct. Anal.} 266 (2014), no. 6, 3543-3571.

\bibitem{Bog2}
\textbf{R.~Bogdan, T.~Grzywny, and M.~Ryznar.} Dirichlet heat kernel for unimodal L\'{e}vy processes. \textit{Stoch. Process. Appl.} 124 (2014), no. 11, 3612-3650.

\bibitem{Bog3}
\textbf{R.~Bogdan, T.~Grzywny, and M.~Ryznar.} Barriers, exit time and survival probability for unimodal L\'{e}vy processes. \textit{Probab. Theory Related Fields} 162 (2015), no. 1-2, 155-198.

\bibitem{Bog4}
\textbf{R.~Bogdan, J.~Rosi\'{n}ski, and \L{}.~Wojciechowski.} L\'{e}vy systems and moment formulas for interlaced multiple Poisson integrals. \textit{ArXiv e-prints}, Nov. 2014.

\bibitem{Bog5}
\textbf{R.~Bogdan, and B.~Siudeja.} Trace estimates for unimodal L\'{e}vy processes. \textit{J. Evol. Equ.} 16 (2016), no. 4, 857-876.

\bibitem{Brown}
\textbf{R.~M.~Brown.} The trace of the heat kernel in Lipschitz domains. \textit{Trans. Amer. Math. Soc.} 339 (1993), no. 2, 889-900.

\bibitem{Grz}
\textbf{T.~Grzywny and K. Szczypkowski.} Perturbations of L\'{e}vy processes. \textit{To appear.}

\bibitem{Kim1}
\textbf{P.~Kim, R.~Song, and Z.~Vondra\v{c}ek.} Two-sided Green function estimates for killed subordinate Brownian motions. \textit{Proc. Lond. Math. Soc.} (3) 104 (2012), no. 5, 927-958.

\bibitem{Kim2}
\textbf{P.~Kim, R.~Song, and Z.~Vondra\v{c}ek.} Global uniform boundary Harnack principle with explicit decay rate and its application. \textit{Stochastic Process. Appl.} 124 (2014), no. 1, 235-267.

\bibitem{Kul1}
\textbf{T.~Kulczycki.} Properties of Green function of symmetric stable processes. \textit{Probab. Math. Statist.} 17 (1997), no. 2, Acta Univ. Wratislav. No. 2029, 339-364.

\bibitem{Kul2}
\textbf{T.~Kulczycki and B.~Siudeja.} Intrinsic ultracontractivity of the Feynman-Kac semigroup for relativistic stable processes. \textit{Trans. Amer. Math. Soc.} 358 (2006), no. 11, 5025-5057.

\bibitem{Min}
\textbf{S.~Minakshisundaram.} Eigenfunctions on Riemannian manifolds. \textit{J. Indian Math. Soc. (N.S.)} 17 (1953), 159-165.

\bibitem{Park}
\textbf{H.~Park and R.~Song.} Trace estimates for relativistic stable processes. \textit{Potential Anal.} 41 (2014), no. 4, 1273-1291.

\bibitem{Sato}
\textbf{K.-i.~Sato.} \textit{L\'{e}vy processes and infinitely divisible distributions.} Translated from the 1990 Japanese original. Revised edition of the 1999 English translation. \textit{Cambridge Studies in Advanced Mathematics, 68.} Cambridge University Press, Cambridge, 2013.

\bibitem{Sil}
\textbf{M.~L.~Silverstein.} Classification of coharmonic and coinvariant functions for a L\'{e}vy process. \textit{Ann. Probab.} 8 (1980), no. 3, 539-575.

\bibitem{Song}
\textbf{R.~Song, and Z.~Vondra\v{c}ek.} On suprema of L\'{e}vy processes and application in risk theory. \textit{Ann. Inst. Henri PoincarŽ Probab. Stat.} 44 (2008), no. 5, 977-986.

\bibitem{Van}
\textbf{M. van den Berg.} On the asymptotics of the heat equation and bounds on traces associated with the Dirichlet Laplacian. \textit{J. Funct. Anal.} 71 (1987), no. 2, 279-293.

\bibitem{Wat}
\textbf{T. Watanabe.} The isoperimetric inequality for isotropic unimodal L\'{e}vy processes. \textit{Z. Wahrsch. Verw. Gebiete} 63 (1983), no. 4, 487-499.


\end{thebibliography}
\end{document}